\documentclass[final, english]{ourlematema}

%% Packages
\usepackage{hyperref}
\usepackage{mathtools} % Fonts and environments for mathematical formuale
\usepackage[shortlabels]{enumitem} % Customize enumerated lists
\usepackage{tikz-cd}
\tikzcdset{scale cd/.style={every label/.append style={scale=#1},
    cells={nodes={scale=#1}}}}
\usepackage{vwcol}  
\usepackage[font=scriptsize]{caption}
\usepackage{comment}
\usepackage{wrapfig}
\usepackage{blkarray}

%% New commands
\newcommand{\Adj}{\mathrm{Adj}}      % Adjugate matrix
\newcommand{\C}{\mathbb{C}}          % Complex numbers
\newcommand{\Cat}{\mathrm{Cat}}      % Catalecticant matrix
\newcommand{\CQ}{\mathrm{CQ}}        % Complete quadrics
\newcommand{\Gr}{\mathrm{Gr}}        % Grassmannian
\newcommand{\Hilb}{\mathrm{Hilb}}    % Hilbert scheme
\newcommand{\Id}{\mathrm{Id}}        % Identity
\newcommand{\ML}{\mathcal{L}}        % Calligraphic L
\renewcommand{\P}{\mathbb{P}}        % Projective space
\newcommand{\PGL}{\mathrm{PGL}}      % PGL group
\DeclareMathOperator{\rk}{rk}        % Rank
\newcommand{\sat}{\mathrm{sat}}      % Saturation
\newcommand{\Sym}{\mathbb{S}}        % Symmetric matrices
\newcommand{\SP}{\mathrm{S}}         % Symmetric power

%%%%%%%%%%%%%%%%%%%%%%%%%%%%%%%%%%%%%%%%%%%%%%%%%%%%%%%%%

\title{Inverting catalecticants of ternary quartics}

\titlenote{LBM’s contribution has been supported by the Novo Nordisk Foundation grant NNF18OC0052483}

\MSC{14Q15}
%2010 classification
%14Q15 is algebraic geometry - computational aspects in algebraic geometry- surfaces, hypersurfaces
\keywords{symmetric matrices, catalecticant matrices, reciprocal variety, ML-degree, complete quadrics}

\author{Laura Brustenga i Moncus\'i}
\address{%
Department of Mathematical Sciences\\
University of Copenhagen\\
\email{brust@math.ku.dk}
}

\author{Elisa Cazzador}
\address{%
Department of Mathematics\\
University of Oslo\\
\email{elisacaz@math.uio.no}
}
\authorline
\author{Roser Homs}
\address{%
Department of Mathematics\\
Technical University of Munich\\
\email{roser.homs@tum.de}
}

\begin{document}

\begin{abstract}
We study the reciprocal variety to the LSSM of catalecticant matrices associated with ternary quartics.  
With numerical tools, we obtain 85 to be its degree and 36 to be the ML-degree of the LSSM. 
We provide a geometric explanation to why equality between these two invariants is not reached,
as opposed to the case of binary forms, 
by describing the intersection of the reciprocal variety and the orthogonal of the LSSM in the rank loci.
Moreover, we prove that only the rank-$1$ locus, namely the Veronese surface $\nu_4(\P^2)$, contributes to the degree of the reciprocal variety.
\end{abstract}

\maketitle

\section*{Introduction}
\label{sec:intro}
For a linear subspace $\ML$ of the space $\Sym^m$ of $m\times m$ symmetric matrices over $\C$, the closure of the set $\lbrace A^{-1}\in\Sym^m\mid A\in\ML,\,\det(A)\neq 0\rbrace$ is called the \emph{reciprocal variety} $\ML^{-1}$ to $\ML$.
In this paper, we focus on a concrete linear space of symmetric matrices (LSSM): the space of catalecticant matrices 

\begin{footnotesize}
\begin{equation}
\label{eq:catalecticant-matrix-ternary-quartics}
	\Cat(2,3)=
	\left\lbrace\left(
	\begin{array}{cccccc}
	a_{(4,0,0)} & a_{(3,1,0)} & a_{(3,0,1)} & {\color{blue}a_{(2,2,0)}} & {\color{magenta}a_{(2,1,1)}} & {\color{violet}a_{(2,0,2)}}\\
	a_{(3,1,0)} & {\color{blue}a_{(2,2,0)}} & {\color{magenta}a_{(2,1,1)}} & a_{(1,3,0)} & {\color{teal}a_{(1,2,1)}} & {\color{cyan}a_{(1,1,2)}}\\
	a_{(3,0,1)} & a_{(2,1,1)} & {\color{violet}a_{(2,0,2)}} & {\color{teal}a_{(1,2,1)}} & {\color{cyan}a_{(1,1,2)}} & a_{(1,0,3)}\\
	a_{(2,2,0)} & a_{(1,3,0)} & a_{(1,2,1)} & a_{(0,4,0)} & a_{(0,3,1)} & {\color{purple}a_{(0,2,2)}}\\
	a_{(2,1,1)} & a_{(1,2,1)} & a_{(1,1,2)} & a_{(0,3,1)} & {\color{purple}a_{(0,2,2)}} & a_{(0,1,3)}\\
	a_{(2,0,2)} & a_{(1,1,2)} & a_{(1,0,3)} & a_{(0,2,2)} & a_{(0,1,3)} & a_{(0,0,4)}\\
	\end{array}
	\right)\colon a_{(i,j,k)}\in\C\right\rbrace
\end{equation}
\end{footnotesize}

associated with ternary quartics
\begin{multline*}
a_{(4,0,0)}x^4+a_{(3,1,0)}x^3y+a_{(3,0,1)}x^3z+{\color{blue}a_{(2,2,0)}}x^2y^2+{\color{magenta}a_{(2,1,1)}}x^2yz+{\color{violet}a_{(2,0,2)}}x^2z^2\\
+a_{(1,3,0)}xy^3+{\color{teal}a_{(1,2,1)}}xy^2z+{\color{cyan}a_{(1,1,2)}}xyz^2+a_{(1,0,3)}xz^3+a_{(0,4,0)}y^4\\
+a_{(0,3,1)}y^3z+{\color{purple}a_{(0,2,2)}}y^2z^2+a_{(0,1,3)}yz^3+a_{(0,0,4)}z^4.
\end{multline*}

Catalecticant matrices were introduced by Sylvester in~\cite{sylvester1970principles}. 
They arise in apolarity theory as matrices associated with $k$-th order contractions, see~\cite{IK99}.

The reciprocal variety $\Cat(k,2)^{-1}$ of catalecticants of binary forms of degree $2k$ is the Grassmannian $\Gr(2,k+2)$, see \cite[Proposition 7.2]{ExponentialVarieties}. 
Its degree is the Catalan number $\frac{1}{k+1}\binom{2k}{k}$ and, moreover, it coincides with the ML-degree of $\Cat(k,2)$ (see Definition~\ref{def:ML-degree} and \cite[Proposition 7.4]{ExponentialVarieties}).

Not much is known about reciprocal varieties to catalecticants associated with forms in more than two variables, as the question posed in~\cite[Problem 7.5]{ExponentialVarieties} suggests. 
The goal of our paper is to study the first unknown case: ternary quartics.
The space of catalecticant matrices $\Cat(2,3)\subset\Sym^6$ has dimension $15$. 
The reciprocal variety $\Cat(2,3)^{-1}$ is the closure of the image of $\Cat(2,3)$ via the birational map $\Sym^6\dashrightarrow \Sym^6$ defined by $A\mapsto A^{-1}$. 

Our first main result is Theorem~\ref{thm:numerical-results}:
 using numerical tools implemented in \verb|HomotopyContinuation.jl|~\cite{HomotopyContinuation.jl} and \verb|LinearCovarianceModels.jl|~\cite{LinearCovarianceModels}, 
we see that the reciprocal variety has degree $85$
and the ML-degree of $\Cat(2,3)$ is $36$.
The difference between these last two invariants is a novelty with respect to both catalecticants of binary forms and generic LSSMs, see \cite[Theorem 1]{Sturmfels2010Gaussian}.

A deeper understanding of the reciprocal variety beyond these figures requires a geometric treatment.
The geometric structure of $\P\,\Cat(2,3)$ is well-understood: its rank at most $r$ locus is the $r$-th secant variety of the Veronese surface $\nu_4(\P^2)$, see~\cite{LandsbergOttaviani} and Section~\ref{sec:Graph-Conormal}.
Therefore, Terracini's lemma can be applied to study the images of rank loci of catalecticants via the inversion map, see Proposition~\ref{prop:dimension-fibers-image-strata} and Corollary~\ref{cor:dimension-phi(C_r)}.

The second main result of the paper is Theorem~\ref{prop:degree-formula}. It combines our knowledge on the images of rank loci with intersection theory on complete quadrics to prove that only the rank-$1$ locus --- namely the Veronese surface $\nu_4(\P^2)$ --- contributes to the degree of $\P\,\Cat(2,3)^{-1}$.

The difference between the numerically computed degree and ML-degree in Section~\ref{sec:numerical} is geometrically explained by the non-empty intersection between the reciprocal variety and the orthogonal of $\P\,\Cat(2,3)$ in the rank loci: this is the content of our third main result Proposition~\ref{prop:intersection}.

We conclude with Conjecture~\ref{conj}: the reciprocal variety $\P\,\Cat(2,3)^{-1}$ is cut out by 27 cubic Pfaffians.

The paper is organized as follows.
Section 1 introduces catalecticant matrices and their reciprocal varieties in the projective setting that will be used throughout the paper.
In Section 2 we provide a first approach to understanding the reciprocal variety in which we apply computational techniques to obtain the essential invariants of the variety presented in Theorem~\ref{thm:numerical-results}.
Section 3 is devoted to the study of the image of rank loci via the inversion map.
In Section 4 we bring everything together to provide a better geometric insight on the reciprocal variety and state our main theoretical results: Theorem~\ref{prop:degree-formula} and Proposition~\ref{prop:intersection}.

\subsection*{Acknowledgements} 
We thank Bernd Sturmfels for proposing the interesting problem and Kristian Ranestad for the many helpful comments and insights on its underlying geometry. 
We would like to give thanks to Sascha Timme for his valuable help and assistance regarding numerical computations.
We also thank Tim Seynnaeve, Martin Vodi{\v{c}}ka and Carlos Am{\'e}ndola for both helpful insight and computations. 
Moreover, we thank Marina Garrote for Magma computations. 
Finally, thanks to all organisers and participants of the LSSM project.

\section{Catalecticant matrices and their inverses}
\label{sec:s1}
A square\footnote{Catalecticant matrices are defined for forms of any degree, see~\cite[Definition 1.3]{IK99}. 
However, we are only interested in those which are symmetric, namely associated to forms of even degree.} catalecticant matrix is a symmetric matrix associated to a homogeneous polynomial of even degree as follows.
First, let us get a taste of catalecticants by giving an example.
For a binary quartic of the form
\[
	a_{(4,0)}x_0^4+a_{(3,1)}x_0^3x_1+a_{(2,2)}x_0^2x_1^2+a_{(1,3)}x_0x_1^3+a_{(0,4)}x_1^4,
\]
its associated catalecticant matrix is 

\begin{equation}
\label{eq:catalecticant-matrix-binary-quartics}
\begin{blockarray}{cccc}
	& {(2,0)} & (1,1) & (0,2)\\[2pt]
\begin{block}{c(ccc)}
	(2,0) & a_{(4,0)} & a_{(3,1)} & a_{(2,2)}\\
	(1,1) & a_{(3,1)} & a_{(2,2)} & a_{(1,3)}\\
	(0,2) & a_{(2,2)} & a_{(1,3)} & a_{(0,4)}\\
\end{block}
\end{blockarray}
\end{equation}
where rows and columns are labeled with the bi-degree of quadric monomials
and the entries are indexed by the sum of their row and column indices.

In general, fix two positive integers $k$ and $n$, and set $m=\binom{n+k}{k}$.
This is the number of monomials in $n+1$ variables of degree $k$.
We denote by $\Sym^{m}$ the space of $m\times m$ symmetric matrices.
A symmetric matrix $A\in\Sym^{m}$ has its rows and columns labeled with the monomials of degree $k$.
A matrix $A\in\Sym^{m}$ is a \emph{catalecticant matrix} if its entries are indexed by the product of the monomials represented by the row and column label.
The set $\Cat(k,n+1)$ of all catalecticant matrices is a linear subspace of $\Sym^m$ with dimension $\binom{2k+n}{2k}$ (the number of monomials in $n+1$ variables of degree $2k$).

By considering non-zero matrices in $\Cat(k,n+1)$ and $\Sym^m$ up to scalar multiplication, we obtain the associated projective linear spaces:
\[
  \P\,\Cat(k, n+1) \simeq \P^N \qquad \P\,\Sym^m \simeq \P^M,
\]
where $N=\binom{2k+n}{2k}-1$ and $M=\binom{m+1}{2}-1$. 

\begin{exa}
Binary quartics are associated with catalecticant matrices as in~\eqref{eq:catalecticant-matrix-binary-quartics}. 
Their LSSM is a hyperplane $\Cat(2,2) \subset \Sym^3$, whose single defining equation is given by imposing equality along the main skew-diagonal. 

Ternary quartics are associated with catalecticant matrices as in~\eqref{eq:catalecticant-matrix-ternary-quartics}. 
They define a codimension-$6$ linear space $\Cat(2,3) \subset \Sym^6$.
\end{exa}

We are interested in the \emph{reciprocal variety} of catalecticants, namely
\[
  \Cat(k,n+1)^{-1} := \overline{ \lbrace{ A^{-1}\in\Sym^m \mid A\in\Cat(k,n+1), \det(A) \neq 0 \rbrace} }.
\] and its projectivisation $\P\,\Cat(k,n+1)^{-1}$, which we will also refer to as the reciprocal variety.

For an invertible matrix $A$, its inverse and adjugate are proportional, so they determine the same point in the projective space, that is $[A^{-1}]=[\Adj(A)]\in \P\,\Sym^m$, where we use the square brackets to indicate the class in $\P\,\Sym^m$ of a particular matrix in $\Sym^m$.
Note that $[\Adj(A)]$ is well-defined also for corank-$1$ matrices $A$.

The adjugate map is a natural extension of the inverse map in the projective setting, so we redefine the reciprocal variety in terms of it.
Formally, the space of symmetric matrices $\Sym^m$ is the square symmetric power $\SP^2(V)$ of a $\C$-vector space $V$ of dimension $m$.
In this setting, the adjugate map corresponds to the map $\SP^2(V)\to \SP^2(\wedge^{m-1}V)$ sending $A$ to $\wedge^{m-1}A$.
Note that $\wedge^{m-1}V$ is again a $\C$-vector space of dimension $m$, so the space $\SP^2(\wedge^{m-1}V)$ can also be identified with the space of $m\times m$ symmetric matrices.
We will denote it by $\hat \Sym^m$ in order to distinguish it from $\SP^2(V)$.

\begin{dfn}
\label{defi:inversionMap}
The \emph{inversion map} is the birational morphism $\phi \colon \P\Sym^m \dashrightarrow \P\hat\Sym^m$ defined by $[A] \mapsto [\wedge^{m-1}(A)]$.
\end{dfn}

Birationality follows from $\wedge^{m-1}(\wedge^{m-1}(A)) = \det(A)^{m-1} A$. 
Equivalently, by defining $\hat \phi \colon \P\hat\Sym^m \dashrightarrow \P\Sym^m$ sending $[B]$ to $[\wedge^{m-1} B]$, the restriction of the composite maps $\phi \cdot\hat\phi$ and $\phi \cdot\hat\phi$ over full-rank matrices is the identity map.

\medskip
From now on, we will consider the reciprocal variety $\P\,\Cat(k,n+1)^{-1}$ as the Zariski closure of the image of $\phi$ restricted to $\P\,\Cat(k,n+1)$.
Observe that, since $\phi$ is birational, the dimension of $\P\,\Cat(k,n+1)^{-1}$ is $N$.

\begin{exa}
\label{exa:BinaryQuartics1} 
For binary quartics, the inversion map restricted to $\P\,\Cat(2,2)\simeq \P^4$ is a rational map $\P\,\Cat(2,2) \dashrightarrow \P\,\hat\Sym^3$ with $\P\,\hat\Sym^3\simeq \P^5$.
A point in the image is parametrized by the $2$-minors of the catalecticant matrix
\[
\begin{cases}
	y_0 & = a_{(2,2)}a_{(0,4)} - a_{(1,3)}^2\\
	& \vdots \\
	y_5 & = a_{(4,0)}a_{(2,2)} - a_{(3,1)}^2
\end{cases}
\]
By eliminating the parameters $a_{(i,j)}$ we get the equation for the reciprocal variety $\P\,\Cat^{-1}(2,2) = \{y_2(y_2-y_3) +y_1y_4 -y_0y_5=0\}$. 
This is a quadric hypersurface in $\P\,\Sym^3$ defining $\Gr(2,4)$, the Grassmannian of planes in a $4$-dimensional vector space.

Note that one can alternatively find the quadric equation by computing the preimage of the catalecticant space via the inversion map. 
This amounts to set equality between the $2$-minors 
$\begin{vsmallmatrix}
	y_1 & y_2 \\
	y_3 & y_4
\end{vsmallmatrix}
$
and
$\begin{vsmallmatrix}
	y_0 & y_2 \\
	y_2 & y_5
\end{vsmallmatrix}
$
of the generic $3 \times 3$ symmetric matrix 
$\begin{psmallmatrix}
	y_0 & y_1 & y_2 \\
	y_1 & y_3 & y_4 \\
	y_2 & y_4 & y_5 
\end{psmallmatrix}.
$
\end{exa}

An alternative interpretation of the reciprocal variety of an LSSM $\mathcal{L}$ is inspired by statistics. 
Let $\Sym^m_{>0}$ be the cone of positive definite $m\times m$ matrices. 
$\mathcal{L}^{-1}\cap\Sym^m_{>0}$ is a centered Gaussian statistical model. 
In other words, it encodes a set of multivariate normal distributions $\mathcal{N}(0,\Sigma)$ where the covariance matrix $\Sigma\subset \Sym^m_{>0}$ is defined by linear constrains on the entries of its inverse $K=\Sigma^{-1}$. 
This type of models are known as \emph{linear concentration models}, see~\cite{Sturmfels2010Gaussian}.

\begin{dfn}
\label{def:ML-degree}
The \emph{maximum likelihood degree} (ML-degree) of a linear concentration model is defined as the number of complex solutions to the critical equations of the \emph{log-likelihood function}
\begin{equation}
\label{eq:log}
\ell(K)=\log \det K - \operatorname{trace}\left(SK\right),
\end{equation}
where $S$ is the sample covariance matrix of sample data vectors, see~\cite[Definition 2.1.4]{OWL} for more details and~\cite[
Definition 5.4]{ExponentialVarieties} for an equivalent algebro-geometric definition.
\end{dfn}

Note that~\eqref{eq:log} depends on the random data encoded in the sample covariance matrix $S$. 
Nevertheless this notion of ML-degree is well-defined because the number of complex solutions to the critical equations is preserved for general data vectors, see~\cite[Remark 2.1]{amendola2021maximum}.

The ML-degree of the linear concentration model represented by an LSSM $\ML$ gives a lower bound on the degree of the reciprocal variety $\ML^{-1}$, see ~\cite[Theorem 5.5]{ExponentialVarieties}. 
Equality is reached if and only if 
$\ML^{-1}\cap\ML^{\perp} =  \emptyset$, where the \emph{orthogonal} space $\mathcal{L}^{\perp}$ is defined by 
\[
	\mathcal{L}^{\perp} = \{ Y \in \hat\Sym^6 \mid \text{trace} (A \cdot Y) = 0, \text{ for all } A \in \mathcal{L} \}.
\]
The two invariants coincide in the case of both generic LSSMs and spaces $\Cat(k,2)$ of catalecticant matrices associated to binary forms, see~\cite[Theorem 2.3]{Sturmfels2010Gaussian} and~\cite[Proposition 7.4]{ExponentialVarieties}, respectively.

\begin{exa}
\label{exa:BinaryQuarticsML} 
To compute the ML-degree of the model $\Cat(2,2)\cap\Sym^3_{>0}$ we replace $K$ and $S$ in 
the log-likelihood function~\eqref{eq:log} by the catalecticant matrix of binary quartics~\eqref{eq:catalecticant-matrix-binary-quartics} and the sample covariance matrix $S=\frac{1}{3}XX^t$, where $X$ is a $3\times 3$ matrix whose columns are random data vectors.   
Its critical equations can be computed with the code below in the algebraic software \verb|Macaulay2|~\cite{M2}

\begin{scriptsize}
	\begin{verbatim}
	I=ideal{jacobian(matrix{{det K}})-(det K)*jacobian(matrix{{trace(S*K)}})}
	J=saturate(I,det K)
	\end{verbatim}
\end{scriptsize}

\noindent
and indeed the degree of the zero-dimensional ideal $J$ coincides with the degree of $\P\,\Cat(2,2)^{-1}=\Gr(2,4)$.
\end{exa}

For $n\ge 2$, the variety $\Cat(k,n+1)^{-1}$ has not been identified with any classical construction, and even their degree and ML-degree are unknown in general.
The study case in the current paper is the space of catalecticants of ternary quartics, $\P\,\Cat(2,3)\simeq \P^{14}$. 
Its inversion map is $\P\,\Cat(2,3) \dashrightarrow \P\,\hat\Sym^6$ with $\P\,\hat\Sym^6\simeq \P^{20}$ and the reciprocal variety $\P\,\Cat(2,3)^{-1}\subseteq \P\,\hat\Sym^6$ is 14-dimensional.

\section{First steps}
\label{sec:numerical}
In this section, we sketch a natural approach to compute the equations of the reciprocal variety $\P\,\Cat(k,n+1)^{-1}$, which turns out to be computationally unfeasible.
We also present the computations of some invariants by means of numerical methods which will give some insight in the forthcoming sections: the degree of $\P\,\Cat(2,3)^{-1}$, the number of linearly independent equations for the reciprocal variety in low degrees and the ML-degree of the linear concentration model represented by $\Cat(2,3)$.
The results are summarized in Theorem~\ref{thm:numerical-results}.
\medskip

Proposition~\ref{prop:Elisas-ideal-general} below shows a natural approach to obtain the equations of $\P\Cat(k,n+1)^{-1}$ based on the fact that $\hat\phi \circ \phi = \Id_{\P\Sym^m}$.

\begin{prop}
\label{prop:Elisas-ideal-general}
Consider the maps $\phi\colon \P\,\Sym^m\dashrightarrow \P\,\hat\Sym^m$ and $\hat\phi\colon\P\,\hat\Sym^m\dashrightarrow \P\,\Sym^m$, and let $U\subseteq \P\,\Sym^m$ be the open subset of invertible symmetric matrices.
For every subvariety $Y$ of $\P\,\Sym^m$, set-theoretically we have
\[
	\hat\phi^{*}(Y) = \phi(Y\cap U)\cup \bigl(\hat\phi^{*}(Y)\cap V(\textstyle \det_{\Sym})\bigr)
\] 
where $\hat\phi^{*}(Y)$ denotes the pullback of $Y$ by $\hat\phi$ and $\det_{\Sym}$ is the determinant polynomial for symmetric matrices of $\P\,\hat\Sym^m$.

In particular, given ideals $I$ and $J$ defining $\overline{\phi(Y\cap U)}$ and $\hat\phi^{*}(Y)$
\[
	I = \sqrt{\sat(J, \textstyle\det_{\Sym})}.
\]
\end{prop}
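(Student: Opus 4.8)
The plan is to establish the set-theoretic decomposition first, and then deduce the ideal-theoretic statement from it by a standard saturation argument.

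For the set-theoretic claim, I would argue by double inclusion. Recall that on the open set $U$ of invertible symmetric matrices the composite $\hat\phi\circ\phi$ restricts to the identity (this is the content of the birationality discussion: $\wedge^{m-1}(\wedge^{m-1}A)=\det(A)^{m-1}A$). For the inclusion $\supseteq$: if $[B]\in\phi(Y\cap U)$, write $[B]=\phi([A])$ with $[A]\in Y\cap U$; then $\hat\phi([B])=\hat\phi(\phi([A]))=[A]\in Y$, so $[B]\in\hat\phi^{*}(Y)$ (at least on the locus where $\hat\phi$ is defined at $[B]$, which holds since $[B]=\phi([A])$ has corank at most $1$ when $A$ is invertible — in fact $B=\Adj(A)$ is invertible). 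The other piece $\hat\phi^{*}(Y)\cap V(\det_{\Sym})$ is trivially contained in $\hat\phi^{*}(Y)$. For the inclusion $\subseteq$: take $[B]\in\hat\phi^{*}(Y)$. If $\det_{\Sym}(B)=0$ we are in the second component and there is nothing to prove. If $\det_{\Sym}(B)\neq0$, then $[B]\in U$ (viewing $U$ in $\P\,\hat\Sym^m$), $\hat\phi$ is a morphism at $[B]$, and $[A]:=\hat\phi([B])\in Y$; moreover $A=\wedge^{m-1}B$ is invertible since $B$ is, so $[A]\in Y\cap U$, and applying $\phi$ gives $\phi([A])=\phi(\hat\phi([B]))=[B]$ because $\phi\circ\hat\phi$ is the identity on invertible matrices. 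Hence $[B]\in\phi(Y\cap U)$. This proves the displayed equality of sets. One should be a little careful about where $\hat\phi$ is defined versus where it is a well-defined morphism: $\hat\phi^{*}(Y)$ is taken as the Zariski closure of the preimage of $Y$ under $\hat\phi$ on its domain of definition, i.e. the complement of the locus of corank $\geq 2$ matrices; I would phrase the argument so that the generic point of each component of $\hat\phi^{*}(Y)$ either lies in $V(\det_{\Sym})$ or lies in $U$, which suffices to conclude after taking closures.

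For the ideal-theoretic statement, let $J$ be an ideal with $V(J)=\hat\phi^{*}(Y)$ and let $I$ define $\overline{\phi(Y\cap U)}$. Saturating $J$ by $\det_{\Sym}$ removes exactly the components of $V(J)$ contained in $V(\det_{\Sym})$, so $V(\sat(J,\det_{\Sym}))$ equals the Zariski closure of $V(J)\setminus V(\det_{\Sym})$. By the set-theoretic decomposition just proved, and since $\phi(Y\cap U)$ is not contained in $V(\det_{\Sym})$ (its points are inverses of invertible matrices, hence invertible), this closure is precisely $\overline{\phi(Y\cap U)}$. Taking the radical then yields an ideal with the same zero set as $I$; since $I$ is radical (it defines a variety), we get $I=\sqrt{\sat(J,\det_{\Sym})}$ by the Nullstellensatz.

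The main obstacle, and the point that needs the most care, is the bookkeeping of domains of definition of the rational maps $\phi$ and $\hat\phi$: both are only defined away from the corank-$\geq 2$ locus, so $\hat\phi^{*}(Y)$ and $\phi(Y\cap U)$ must be interpreted as Zariski closures of images/preimages of genuine morphisms on open dense subsets. The cleanest route is to note that on $U$ both maps are isomorphisms onto their images (mutually inverse), so the only subtlety is controlling the components that escape into $V(\det_{\Sym})$ — and this is exactly what the $V(\det_{\Sym})$ term on the right-hand side, and the saturation on the ideal side, are designed to absorb. Everything else is formal.
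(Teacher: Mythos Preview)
Your proof is correct and follows essentially the same approach as the paper: decompose $\hat\phi^{*}(Y)$ according to whether $\det_{\Sym}$ vanishes, and use that $\phi|_U$ and $\hat\phi|_{\hat U}$ are mutually inverse to identify the invertible part with $\phi(Y\cap U)$. The paper's own proof is considerably terser---it simply writes the tautological decomposition $\hat\phi^{*}(Y)=(\hat\phi^{*}(Y)\cap\hat U)\cup(\hat\phi^{*}(Y)\cap V(\det_{\Sym}))$ and asserts the identification on $\hat U$---whereas you spell out both inclusions, the saturation step, and the bookkeeping of domains of definition; but the underlying argument is the same.
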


\begin{proof}
  Consider $\hat U\subseteq \P\,\hat\Sym^m$ the open subset of invertible symmetric matrices.
  Set-theoretically, we have
\[
   \hat\phi^{*}(Y) =  \bigl(\hat\phi^{*}(Y)\cap \hat U\bigr)\cup \bigl( \hat\phi^{*}(Y)\cap V(\textstyle \det_{\Sym})\bigr).
 \] Now, $\hat\phi^{*}(Y)\cap \hat U=\phi(Y\cap U)$ follows straightforwardly from the fact that $\phi|_U$ and $\hat\phi|_{\hat U}$ are mutually inverse.
\end{proof}

In our particular case of catalecticant matrices $\P\,\Cat(k,n+1)$ in the space of symmetric matrices $\P\,\Sym^m$, Proposition~\ref{prop:Elisas-ideal-general} gives, first, equations that $\P\,\Cat(k,n+1)^{-1}$ must satisfy and, second, a procedure to compute its exact equations.
Namely, the set of symmetric matrices $S$ with catalecticant inverse, that is $\phi(S)\in\P\,\Cat(k,n+1)$, contains $\P\,\Cat(k,n+1)^{-1}$.
Such a set is defined by the linear relations on the maximal minors of $S$ given by the condition $\phi(S)\in \P\,\Cat(k,n+1)$.
These relations define the ideal $J$, and
\begin{equation}
\label{eq:cat=Elisas-quot-det}
 	\P\,\Cat(k,n+1)^{-1}=\overline{V(J)\setminus V(\textstyle\det_{\Sym})}.
\end{equation}
Computing the saturation of $J$ with respect to $\det_{\Sym}$, we could obtain the equations for $\P\,\Cat(k,n+1)^{-1}$, but this turns to be computationally unfeasible for ternary quartics. 

\paragraph*{Degree of the reciprocal variety} 
Equation~\eqref{eq:cat=Elisas-quot-det} allows us to use the ideal $J$ to compute the degree of $\P\,\Cat(2,3)^{-1}$ by means of numerical methods.
Indeed, for a general linear space $L\subseteq\P\,\hat\Sym^6$ of codimension $\dim(V(J))=14$, we have $\dim(V(J)\cap L)=0$ and
\[
	(V(J)\cap L)\cap V(\textstyle\det_{\Sym}) = \emptyset.
\] 
Hence, $\deg(\P\,\Cat(k,n+1)^{-1})$ is equal to the number of points in $V(J)\cap L$.
We use the monodromy method, implemented in \verb|HomotopyContinuation.jl|~\cite{HomotopyContinuation.jl}, setting as parameters the coefficients of $L$ to compute the degree of $\P\,\Cat(2,3)^{-1}$.
We obtain
\[
 	\deg(\P\,\Cat(2,3)^{-1})=85.
\]

\paragraph*{Number of equations in low degree} 
An invariant we may numerically estimate is, for a fixed degree $d$, the number $HF_d$ of linearly independent forms in $I$.
For example, for $d=3$, the space $\Omega$ of forms in $\dim(\hat\Sym^6)=21$ variables of degree $3$ has dimension $\binom{3+21-1}{3}=1771$.
Given a symmetric matrix $S$, the set of forms $F\in\Omega$ for which $F(S)=0$ gives rise to a hyperplane $H_S\subseteq \Omega$ defined by the linear form with coefficients these $1771$ monomials evaluated at $S$.
Thus, for a generic set of symmetric matrices $\{S_1,\dots,S_{1771}\}$, we have
\[
 	\bigcap_{i=1}^{1771}H_{S_i}=\{F_0\}.
\] 
Instead, for a generic set of points $\{P_1,\dots,P_{1771}\}$ of $\P\,\Cat(2,3)^{-1}$, if there are forms of degree $3$ in $I$, the hyperplanes $H_{P_i}$ will exhibit some linear dependence, coming from the fact that there would be a linear space of positive dimension lying in the intersection of all of them, hence
\[
	HF_3=\dim(\bigcap_{i=1}^{1771}H_{P_i}).
\]
Therefore, we may compute $HF_3$ as the rank of the $1771\times 1771$ matrix of coefficients of the linear forms defining $H_{P_i}$.

Using \verb|Julia|, we sampled points in $\P\,\Cat(2,3)^{-1}$ as inverses of randomly generated catalecticant matrices, and we obtained $HF_d=0,27,510$ for $d=2,3,4$.

\paragraph*{Maximum likelihood degree} 
The computational cost for the required symbolic computation of the critical points of~\eqref{eq:log} analogous to Example~\ref{exa:BinaryQuarticsML} is too high already for ternary quartics in \verb|Macaulay2|.
The software \verb|Magma|~\cite{Magma}, allows us to perform a symbolic computation\footnote{These computations took an average of 5 days and were performed with Magma V2.25-5 on a Dual Core Intel(R) Xeon(R) (2.20 GHz).} in which we obtain that the ML-degree of the model represented by $\Cat(2,3)$ is 36.

Note that, for a sample covariance matrix $S$ associated to general data vectors, all complex solutions to~\eqref{eq:log} are different, i.e. they have multiplicity one, see~\cite[Remark 2.1]{amendola2021maximum}. Therefore, a numerical approach is possible and it is already offered by the \verb|Julia| package \verb|LinearCovarianceModels.jl|~\cite{LinearCovarianceModels}. Note that our desired value corresponds to the dual ML-degree of $\ML$ defined in~\cite{Sascha}. 

The code used in \verb|Macaulay2| and \verb|Julia| to compute ML-degrees of LSSMs of catalecticant matrices can be found in~\cite{Complementary}.

We summarize all the numerical results as follows:

\begin{thm}
\label{thm:numerical-results}
The degree of the reciprocal variety $\P\,\Cat(2,3)^{-1}$ is $85$, whereas the ML-degree of the linear concentration model $\P\,\Cat(2,3)$ is $36$.
Moreover, there are $27$ cubics among the minimal generators of the defining ideal of the reciprocal variety.
\end{thm}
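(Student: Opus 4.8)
All three assertions are computational, and the plan is to establish each by an explicit calculation resting on the constructions of Section~\ref{sec:numerical}. \emph{Degree.} Recall from~\eqref{eq:cat=Elisas-quot-det} that $\P\,\Cat(2,3)^{-1}$ is the closure of $V(J)\setminus V(\textstyle\det_{\Sym})$, where $J$ cuts out the symmetric matrices whose adjugate $\wedge^5 S$ is a catalecticant matrix. I would first verify that $\dim V(J)=14$ and that every component of $V(J)$ other than $\P\,\Cat(2,3)^{-1}$ has dimension at most $13$; then, for a general linear subspace $L\subseteq\P\,\hat\Sym^6$ of codimension $14$, the scheme $V(J)\cap L$ is zero-dimensional, reduced, disjoint from $V(\textstyle\det_{\Sym})$, and its cardinality equals $\deg\P\,\Cat(2,3)^{-1}$. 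To enumerate these points I would run the monodromy method of \verb|HomotopyContinuation.jl|~\cite{HomotopyContinuation.jl} with the coefficients of $L$ as parameters, seeded by the inverse of a random catalecticant matrix, and certify completeness of the orbit with the trace test; the expected count is $85$.

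\emph{ML-degree.} Following Example~\ref{exa:BinaryQuarticsML} I would substitute into the log-likelihood function~\eqref{eq:log} the generic catalecticant matrix~\eqref{eq:catalecticant-matrix-ternary-quartics} for $K$ and a sample covariance matrix $S=\tfrac16 XX^{t}$ with $X$ a random $6\times6$ matrix, form the ideal of critical equations of $\ell$ in the $15$ parameters, and saturate it by $\det K$ to discard solutions on the determinant locus. For general $S$ the resulting ideal is zero-dimensional and radical by~\cite[Remark 2.1]{amendola2021maximum}, so its degree is the ML-degree. This elimination is out of reach for \verb|Macaulay2| but feasible symbolically in \verb|Magma|~\cite{Magma}; as an independent check one recomputes the number numerically with \verb|LinearCovarianceModels.jl|~\cite{LinearCovarianceModels}. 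Both routes are expected to return $36$.

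\emph{Cubics.} For each degree $d$ let $HF_d$ be the dimension of the degree-$d$ graded piece of the homogeneous ideal $I$ of $\P\,\Cat(2,3)^{-1}$. As explained in Section~\ref{sec:numerical}, sampling $\binom{d+20}{d}$ general points of the variety as inverses of random catalecticant matrices and evaluating every degree-$d$ monomial in the $21$ coordinates of $\P\,\hat\Sym^6$ at each of them yields a square matrix whose corank is $HF_d$; carrying this out for $d=2,3$ gives $HF_2=0$ and $HF_3=27$. Since $I$ contains no quadrics, any $27$ linearly independent cubics in $I$ belong to a minimal generating set, which is the last assertion.

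\emph{Main obstacle.} The delicate points are all matters of certifying the numerics rather than of geometry. For the degree one must know that monodromy has produced the full fiber and that no solution lies on $V(\textstyle\det_{\Sym})$; the trace test settles the former and genericity of $L$ the latter, but the dimension bound on the extraneous components of $V(J)$ has to be checked honestly. For $HF_3$, a numerical rank computation of a $1771\times1771$ matrix is fragile, so I would repeat it in exact arithmetic over a finite field to confirm that the corank is exactly $27$. The most expensive single step is the \verb|Magma| elimination for the ML-degree --- it is the real bottleneck --- though it is the least ambiguous once it terminates.
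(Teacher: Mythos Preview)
Your proposal is correct and follows essentially the same computational route as the paper: the degree via monodromy on $V(J)\cap L$ in \texttt{HomotopyContinuation.jl}, the ML-degree by the symbolic elimination in \texttt{Magma} cross-checked with \texttt{LinearCovarianceModels.jl}, and the count of cubics via the corank of the evaluation matrix on sampled inverse catalecticants, with minimality deduced from $HF_2=0$. If anything, you are slightly more careful than the paper in flagging the trace test, the finite-field rank check, and the need to bound the dimension of the components of $V(J)$ supported on $V(\det_{\Sym})$; the paper tacitly assumes the latter when asserting $(V(J)\cap L)\cap V(\det_{\Sym})=\emptyset$.
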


\begin{rem}
The numerical approach for ternary forms of higher degree is not feasible for short-time computations. 
At the level of ternary quartics, the degree (and ML-degree) of a generic $15$-dimensional LSSM of $6\times 6$ matrices is $1016$, see~\cite{manivel2020complete} and the implementation of the algorithm to compute the degree of the reciprocal variety of a generic LSSM in~\cite{MPIrepo}.
For ternary sextics, the degree of a generic $28$-dimensional LSSM of $10\times 10$ matrices is $17.429.229.428$. 
The ML-degree of $\Cat(3,3)$ is already lower bounded by $180.000$.
\end{rem}

\section{Inverting the rank loci}
\label{sec:Graph-Conormal}
Let us denote by $C_r$ the locus of matrices of rank at most $r$ in $\P\,\Cat(2,3)$. 
The rank loci give a stratification $C_1 \subseteq \ldots \subseteq C_6=\P\,\Cat(2,3)$ and have a well-known geometric structure:
$C_r$ is the $r$-secant variety of the Veronese surface $v_{4}(\P^2)$. 

The goal of this section is to study the image of each $C_r$ via the inversion map $\phi$ in the sense of~\eqref{eq:imageRankLoci}.
The main result is Proposition~\ref{prop:dimension-fibers-image-strata}, which provides a description of the image of any rank-$r$ point in $\P\,\Cat(2,3)$.

On one hand, this allows us to compute all dimensions of the image of rank loci, see Corollary~\ref{cor:dimension-phi(C_r)}.
This will be crucial later in Proposition~\ref{prop:degree-formula} to show how the rank loci contribute to the degree of the reciprocal variety $\P\,\Cat(2,3)^{-1}$.

On the other hand, it provides explicit equations for the image of rank-$2$ points. 
This will help to understand the intersection of the reciprocal variety with the orthogonal space of $\P\,\Cat(2,3)$, see Proposition~\ref{prop:intersection}.

\paragraph*{Geometry of the rank loci}
Let us start with a focus on the rank loci of $\P\,\Cat(2,3)$. In~\cite{LandsbergOttaviani} it is proven that for every $r$ the locus  $C_r$ corresponds to the $r$-secant variety  $\sigma_r(v_{4}(\P^2))$ of the Veronese surface. 
Their dimensions are known by Alexander-Hirshowitz theorem~\cite{brambilla2008alexander} and they are equal to $2, 5, 8, 11, 13$ for $r=1,2,3,4,5$, respectively.
These dimensions already indicate that the catalecticant space is far form being general: indeed, for a generic $15$-dimensional LSSM $\mathcal{L}$, the rank loci in $\P\mathcal{L}$ would have dimension
$-1$ (the emptyset), $4,8,11$ and $13$, respectively.

The relation between rank loci and secant varieties of Veronese embeddings rarely occurs, see~\cite{BucBuc},~\cite{LandsbergOttaviani}.
This is a partial obstacle to the generalisation of our results to any $\P\,\Cat(k,n+1)$.

\medskip
The notion of image via a rational map can be extended for a point in its base locus, see~\cite[p. 75]{harris1992first}. 
In our setting, if $\Gamma$ is the closure of the graph of $\phi$ in $\P\,\Cat(2,3) \times \P\hat\Sym^6$ and $\pi_1, \pi_2$ are the projection maps from this product, we may write
\begin{equation}
\label{eq:imageRankLoci}
	\phi(X) := \pi_2( \pi_1^{-1}(X) \cap \Gamma )\subseteq \P\hat\Sym^6.
\end{equation}

\begin{rem}
\label{rem:relation-blowup-graph}
For every subvariety $X\subset \P\,\Cat(2,3)$, its image $\phi(X)$ can be understood in the following way: we may regularize $\phi$ by a (finite) sequence of blow-ups along smooth centers, and consider the strict transform of $X$ by such a sequence of blow-ups.
Its image via the regularized map will coincide with $\phi(X)$.
In Section~\ref{sec:variety}, we will see that a natural way of resolving the indeterminacy locus of $\phi$ is by blowing-up the locus of rank-$1$ symmetric matrices, then blowing-up the strict transform of rank-$2$ symmetric matrices and so on.
The relation between blowing-up rank loci and the graph $\Gamma$ will make the varieties $\phi(C_r)$ play a central role in computing the degree of the whole $\P \,\Cat(2,3)^{-1}$.
\end{rem}

We give here two remarks: the first on how $\phi$ affects the rank,  the second addressing a particular property of the rank loci $C_r$ as a consequence of being secant varieties.

\begin{rem}
\label{rem:rank-to-corank}
The inversion map $\phi$ sends rank-$r$ matrices to symmetric matrices of rank at most $6-r$.
This follows by noting that every pair of points $(A,B) \in \Gamma$ satisfies $AB = 0$.
\end{rem}

\begin{rem}
\label{rem:PGL(3)-orbits}

The natural action of $\PGL(3)$ on $\P^2$ induces an action on the Hilbert scheme of points $\Hilb_r(\P^2)$, which has an open orbit when $r \leq 4$, namely the orbit of $r$ general points.
\end{rem}

\paragraph*{Image of the rank loci}
We next provide a procedure to obtain parametrizations for the image of a rank-$r$ point in any LSSM, and use this to study the image of catalecticant rank loci.

\begin{lemma}
\label{lem:parametrization-with-limits}
Let $L \subset \P\Sym^m$ be any projective LSSM with rank loci $L_1, \ldots L_m$ and let $A \in L_r \setminus L_{r-1}$ be any rank-$r$ point.
Then the image of $A$ via the inversion map is, set-theoretically,
\begin{equation}
\label{eq:limit-formula}
	\overline{ 
	\{ \lim\nolimits_{t\to 0}\phi(A + t X) \mid X \in L, \, \det X \neq 0 \} 
	}.
\end{equation}
Moreover, the above set of limits only depends on the normal space $N_{L_r}$ to $L_r$ at $A$.
In particular, $\dim \phi(A) \leq \dim N_{L_r|_A} - 1$.
\end{lemma}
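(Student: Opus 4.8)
The plan is to analyze $\phi(A + tX)$ directly through its coordinate expression. Since $\phi([M]) = [\wedge^{m-1}M]$, the entries of $\phi(A+tX)$ are the $(m-1)\times(m-1)$ minors of $A+tX$, each a polynomial in $t$. For $A$ of rank $r$, I would first record that $\wedge^{m-1}(A+tX) = \sum_{j\geq 0} t^j M_j$, where $M_0 = \wedge^{m-1}A$ and the coefficient $M_j$ is obtained by replacing $j$ of the $m-1$ exterior factors of $A$ by $X$. Because $\operatorname{rk}(A) = r$, the lowest non-vanishing coefficient is $M_{m-1-r}$ (one needs at least $m-1-r$ copies of $X$ to ``fill in'' the kernel directions of $A$); all $M_j$ with $j < m-1-r$ vanish identically since they contain a wedge of more than $r$ columns of $A$. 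Hence $\lim_{t\to 0}\phi(A+tX) = [M_{m-1-r}(X)]$ whenever $M_{m-1-r}(X)\neq 0$, and the closure of these points as $X$ ranges over the invertible matrices of $L$ is exactly the set in~\eqref{eq:limit-formula}. That this set equals $\phi(A)$ in the sense of~\eqref{eq:imageRankLoci} follows from Remark~\ref{rem:relation-blowup-graph}: a one-parameter family $A+tX$ with $\det X\neq 0$ is a generic arc through $A$ inside $L$, its strict transform in the graph $\Gamma$ meets the fiber $\pi_1^{-1}(A)$, and arcs of this form suffice to sweep out $\pi_1^{-1}(A)\cap\Gamma$ because $\Gamma$ is irreducible and the full-rank locus of $L$ is dense; any arc in $L$ with non-trivial higher-order terms gives the same limit point as its linear truncation by the argument above.

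Next I would establish the dependence only on the normal space $N_{L_r}$ at $A$. The key observation is that the coefficient $M_{m-1-r}(X)$ is, up to the fixed data of $A$, multilinear and alternating in the $m-1-r$ copies of $X$, but more importantly it is unchanged if $X$ is replaced by $X + Y$ with $Y \in T_{L_r|_A}$, the tangent space to the rank-$r$ locus at $A$. Indeed, adding a tangent direction $Y$ contributes terms containing a wedge involving $A$ together with a first-order deformation of $A$ staying inside $L_r$; since $L_r$ consists of matrices of rank $\le r$ and $A$ itself has rank exactly $r$, such a wedge still involves at least $m-r$ columns drawn from a rank-$\le r$ configuration and hence vanishes. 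Concretely one uses that $Y\in T_{L_r|_A}$ means $Y = \dot A(0)$ for a curve in $L_r$, so $\operatorname{rk}(A + sY + O(s^2)) \le r$, which forces the relevant exterior powers to drop. Therefore $M_{m-1-r}(X)$ factors through the quotient $L / T_{L_r|_A}$, i.e.\ through the normal space $N_{L_r|_A}$, and the set of limits depends only on $N_{L_r|_A}$ as claimed.

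The dimension bound $\dim\phi(A)\leq \dim N_{L_r|_A} - 1$ is then immediate: the map $X \mapsto [M_{m-1-r}(X)]$ is a rational map from $\P(N_{L_r|_A})$ to $\P\hat\Sym^m$ (well-defined off the locus where $M_{m-1-r}$ vanishes), and $\phi(A)$ is the closure of its image, so its dimension is at most that of the source $\P(N_{L_r|_A})$, which is $\dim N_{L_r|_A} - 1$.

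I expect the main obstacle to be the rigorous identification of the set~\eqref{eq:limit-formula} with $\phi(A)$ as defined via the graph in~\eqref{eq:imageRankLoci} --- that is, checking that linear arcs $A + tX$ already realize every point of the fiber $\pi_1^{-1}(A)\cap\Gamma$, and that no point of $\phi(A)$ is missed by restricting to $\det X \neq 0$ rather than allowing arbitrary arcs in $L$. This is where one must invoke irreducibility of $\Gamma$ and density of the full-rank locus carefully, and possibly the valuative criterion / arc-space description of the blow-up in Remark~\ref{rem:relation-blowup-graph}; the purely algebraic computation of the leading coefficient $M_{m-1-r}$ and its tangent-space invariance, by contrast, is routine multilinear algebra.
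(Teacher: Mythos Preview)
Your approach is correct and genuinely different from the paper's. The paper argues the first claim in one sentence (``by continuity'') and then, for the normal-space dependence and dimension bound, invokes blow-up geometry abstractly: it identifies $\pi_1^{-1}(A)\cap\Gamma$ with the fiber over $A$ in the strict transform $L_r^{(r)}$ after successively blowing up rank loci, and observes that this strict transform is the projectivized normal bundle $\P N_{L_r^{(r-1)}}$, whose fiber dimension gives the bound directly. You instead expand $\wedge^{m-1}(A+tX)$ explicitly, isolate the leading coefficient $M_{m-1-r}(X)$, and argue invariance under $X\mapsto X+Y$ for $Y$ tangent to the rank-$r$ locus by a multilinear-algebra vanishing argument. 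Your route is more concrete and yields an explicit parametrization of $\phi(A)$ (useful for the computations in Proposition~\ref{prop:dimension-fibers-image-strata}); the paper's route makes the dependence on the normal space conceptually transparent without computation.

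Two minor points worth tightening. First, your tangent-invariance step (``such a wedge still involves at least $m-r$ columns drawn from a rank-$\le r$ configuration and hence vanishes'') is morally right but imprecise as stated; the clean version is to diagonalize $A=\begin{psmallmatrix}A_0&0\\0&0\end{psmallmatrix}$ with $A_0$ invertible $r\times r$, note that $T_A(\text{rank-}r\text{ locus})=\{Y:Y_{22}=0\}$, and check that $M_{m-1-r}(X)$ depends only on $A_0$ and the block $X_{22}$. Second, what you actually prove is that $M_{m-1-r}$ factors through $L/(L\cap T_A(\text{rank-}r\text{ locus in }\Sym^m))$, which is a quotient of $N_{L_r|_A}=L/T_AL_r$ (since $T_AL_r\subset L\cap T_A(\text{rank-}r\text{ locus})$); this is a slightly stronger conclusion, and the stated bound follows a fortiori. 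The gap you flag---that linear arcs with $\det X\neq 0$ suffice to sweep out $\pi_1^{-1}(A)\cap\Gamma$---is real but is equally present in the paper's one-line continuity argument, so you are not missing anything the paper supplies.
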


\begin{proof} 
By continuity, the points in $\phi(A)$ are limits of images of points approaching  $A$ along directions where $\phi$ is well-defined, that is~\eqref{eq:limit-formula}. 

Recall that, by definition, $\phi(A)$ is $\pi_2(\pi_1^{-1}(A) \cap \Gamma)$. Remark~\ref{rem:relation-blowup-graph} implies that $\pi_1^{-1}(A) \cap \Gamma =  \pi_1^{-1}(A) \cap L_r^{(r)}$, 
where $L_r^{(s)}$ is the proper transform of $L_r$ in the blow-up of $\P\Sym^m$ along the symmetric loci of ranks $1, \ldots, s$. 
Equivalently, $L_r^{(r)}$ is the projectivization $\P N_{L_r^{(r-1)}}$ of the normal bundle of $L_r^{(r-1)}$ in the $(r-1)$-th blow-up of $\P\Sym^m$. 
In particular, the dimension of $\pi_1^{-1}(A) \cap L_r^{(r)}$ is the fiber dimension of $\P N_{L_r^{(r-1)}}$ at $\pi_1^{-1}(A)$, which might drop after projecting via $\pi_2$. 
\end{proof}

\begin{prop}
\label{prop:dimension-fibers-image-strata}
The image of a rank-$r$ point in $\P\,\Cat(2,3)$ via $\phi$ is
\begin{itemize}[label={--}]
	\itemsep-0.3em
	\item a $\P^5$, a $\P^2$ and a point for $r=3,4,5$, respectively;
	\item a cubic hypersurface embedded in a $\P^9$ for $r=2$;
	\item an $11$-fold embedded in a $\P^{14}$ for $r=1$.
\end{itemize}
Moreover, for each rank-$2$ point, the associated cubic hypersurface is defined by the cubic Pfaffian of a $6\times 6$ skew-symmetric matrix.
\end{prop}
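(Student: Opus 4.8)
The plan is to exploit $\PGL(3)$-equivariance, identify the limit set of Lemma~\ref{lem:parametrization-with-limits} with the image of a linear space under an adjugate map, and then read off dimensions and equations. \textbf{Reduction and ambient space.} Since $\phi$ is $\PGL(3)$-equivariant and, for $r\le 4$, the rank-$r$ catalecticants with general support form a dense $\PGL(3)$-orbit in $C_r$ (Remark~\ref{rem:PGL(3)-orbits}), it suffices to compute $\phi(A)$ for $A=\Cat_f$ with $f=\ell_1^4+\dots+\ell_r^4$, the $[\ell_i]$ being general points of $\P^2$; for $r=5$ one takes a general point of $C_5$. Remark~\ref{rem:rank-to-corank} gives $AB=0$ for every $(A,B)\in\Gamma$, so $B$ is supported on $\ker A$ and thus $\phi(A)\subseteq\P\,\SP^2(\ker A)$. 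Identifying the source of the catalecticant with the space of ternary conics, $\ker\Cat_f$ is the $(6-r)$-dimensional linear system $L$ of conics through the $[\ell_i]$; hence $\phi(A)$ is contained in a $\P^{14},\P^9,\P^5,\P^2$ or $\P^0$ for $r=1,\dots,5$, and for $r=5$ it is already the point $[\Adj A]$.

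\textbf{The limit as the adjugate of a compression.} For $r\le 4$, by Lemma~\ref{lem:parametrization-with-limits} the image $\phi(A)$ is the closure of $\{\lim_{t\to 0}[\Adj(A+tX)] : X\in\Cat(2,3),\ \det X\ne 0\}$. A Schur-complement computation with respect to a splitting $\C^6=E\oplus\ker A$ with $A|_E$ nondegenerate shows that, for generic $X$, this limit equals $[\Adj_{6-r}(X_{22})]$, where $X_{22}$ is the compression of the form $X$ to $\ker A$ and $\Adj_{6-r}$ denotes the $(6-r)\times(6-r)$ adjugate (the order of vanishing of $\Adj(A+tX)$ at $t=0$ being $6-r-1$). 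Intrinsically $X_{22}$ is the image of $g$ under the transpose of the multiplication map $\mu\colon\SP^2(L)\to(\text{quartics})$, $q\odot q'\mapsto qq'$, so as $X=\Cat_g$ varies $X_{22}$ sweeps the linear space $N^\perp$, where $N:=\ker\mu$; hence $\phi(A)=\overline{\Adj_{6-r}(\P\,N^\perp)}$. A short computation with conics identifies $N$: for $r=3,4$ the conics generating $L$ satisfy no quadratic relation (they define a dominant rational map $\P^2\dashrightarrow\P^2$, resp.\ a general pencil), so $N=0$; for $r=2$, taking $L=\langle xy,xz,yz,z^2\rangle$, $N$ is the line through the Plücker syzygy $\kappa=(xy)\odot(z^2)-(xz)\odot(yz)$; for $r=1$, taking $L=\langle y^2,yz,z^2,xy,xz\rangle$, $N$ is the $3$-dimensional net of $2\times 2$ minors of $\left(\begin{smallmatrix} y^2 & yz & xy\\ yz & z^2 & xz\end{smallmatrix}\right)$.

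\textbf{Dimensions and equations.} The adjugate is an involution up to a power of $\det$, hence injective on invertible symmetric matrices; since $N^\perp$ meets that locus in each of the normal forms above (a direct check, which also guarantees $X_{22}$ is generically invertible), $\Adj_{6-r}$ is generically injective on the linear space $\P\,N^\perp$. Thus $\dim\phi(A)=\dim\P\,N^\perp=\binom{7-r}{2}-1-\dim N=11,8,5,2,0$ for $r=1,\dots,5$, meeting the upper bounds $13-\dim C_r$ of Lemma~\ref{lem:parametrization-with-limits}. For $r=3,4$ we get $\phi(A)=\P\,\SP^2(L)$, namely a $\P^5$ and a $\P^2$; for $r=1$ an $11$-fold in $\P^{14}$. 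For $r=2$, $N^\perp=\kappa^\perp$ is a hyperplane, so $[B]\in\phi(A)$ with $\det B\ne 0$ iff $\Adj_4(B)\in\kappa^\perp$, i.e.\ iff $P(B):=\operatorname{tr}\big(\Adj_4(B)\,\kappa\big)=0$; this $P(B)$ is the coefficient of $s$ in $\det(B+s\kappa)$, a cubic. Since $P$ is nonzero ($\phi(A)$ is $8$-dimensional) and no factor of it lies in the irreducible quartic $\{\det_4=0\}$, we conclude $\phi(A)=V(P)$, a cubic hypersurface in $\P^9$.

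\textbf{The Pfaffian, and the main obstacle.} It remains to recognise $P$ as a Pfaffian. Here I would use that $\kappa$ is a \emph{smooth} (rank-$4$) quadric: it identifies $L$ with its dual and puts an orthogonal structure on $\wedge^2 L\cong\C^6$. Writing $P(B)=\det(B)\operatorname{tr}(B^{-1}\kappa)$ and working in the above normal form, one produces an explicit $6\times 6$ skew-symmetric matrix $M(B)$ with entries linear in the coordinates of $B$ and verifies that $\operatorname{Pf}(M(B))$ equals $P(B)$ up to a nonzero constant — at least consistent, as a $6\times 6$ Pfaffian has degree $3$ and $6=\dim\wedge^2 L$. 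This is the step I expect to be hardest: unlike the rest of the argument (formal, through Lemma~\ref{lem:parametrization-with-limits} and a Schur complement, or an easy computation with conics), the Pfaffian structure is not forced by a symmetry — $P$ genuinely depends on the $\kappa$-component of $B$, so no $\mathrm{SL}_2\times\mathrm{SL}_2$-equivariant model of $M$ can work, and the representation must be found and verified by hand.
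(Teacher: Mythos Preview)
Your approach is genuinely different from the paper's and in several respects more illuminating: the paper simply fixes a representative in each $\PGL(3)$-orbit, parametrises the limits of~\eqref{eq:limit-formula} and runs elimination in \texttt{Macaulay2}, whereas you give a conceptual identification of $\phi(A)$ with the reciprocal variety of the linear space $N^\perp\subset \SP^2(L^*)$ via the Schur-complement limit. This is correct (your order-of-vanishing computation is exactly what the block inverse gives), and it explains structurally why the answer is a $\P^5$, a $\P^2$, a cubic hypersurface in $\P^9$, etc., rather than merely verifying it numerically. Your determination of $N$ via the multiplication map $\mu\colon \SP^2(L)\to S^4$ is clean and correct in all the normal forms you list.

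There is, however, a genuine gap. The statement concerns \emph{every} rank-$r$ point, and your reduction ``it suffices to compute $\phi(A)$ for $f=\ell_1^4+\dots+\ell_r^4$'' only covers the open $\PGL(3)$-orbit. For $r=3,4$ this is harmless: the paper closes the gap by semicontinuity (dimension of $\phi(A)$ can only go up at special points but is already at the bound $13-\dim C_r$, so it is constant; and a degree-$1$ variety of the right dimension must be the full $\P^5$ or $\P^2$). For $r=2$, however, there is a second orbit --- points on \emph{tangent} lines to $\nu_4(\P^2)$ --- and here your argument needs to be rerun. If you do so with, say, $A$ corresponding to $f=x^4+x^3y$, then $L=\langle y^2,xz,yz,z^2\rangle$ and $N$ is again one-dimensional, generated by $\kappa'=(y^2)\odot(z^2)-(yz)^{\odot 2}$. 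Your framework then still yields a cubic hypersurface (so the dimension and ``cubic'' claims survive), but $\kappa'$ has rank $3$, not $4$. Hence your proposed route to the Pfaffian structure --- using that $\kappa$ is a smooth quadric to put an orthogonal structure on $\wedge^2 L$ --- breaks down precisely on the tangent orbit. The paper avoids this by brute force: it treats the secant and tangent cases separately and exhibits explicit $6\times 6$ skew matrices $S_1,S_2$ whose Pfaffians realise the two cubics. If you want your argument to be complete, you must either produce a uniform construction of the skew matrix that does not rely on $\kappa$ being nondegenerate, or handle the rank-$3$ case of $\kappa$ by a separate explicit check as the paper does.
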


\begin{proof}
The case $r=5$ is trivial. 
By Lemma~\ref{lem:parametrization-with-limits}, with $L = \P\,\Cat(2,3)$, the dimension of $\phi(A)$ is bounded by $\dim \P\,\Cat(2,3)-\dim T_A C_r-1$, that is $\dim \phi(A) \leq 2$ (resp. $ 5, 8, 11$) when $\rk (A)=4$ (resp. $3,2,1$). 
We use~\eqref{eq:limit-formula} to build a parametrization for $\phi(A)$ and use \verb|Macaulay2| to find its equations via elimination.

The result follows for $r=3,4$ as soon as it is checked on one general point of rank $r$.
Indeed, $\phi(A)$ only depends on the normal space $N_{C_r|_A}$ (hence on its tangent space) so by Terracini's Lemma~\cite{terracini-1911-sulle-vk}, $\phi$ is constant on each $r$-secant space. 
Moreover, Remark~\ref{rem:PGL(3)-orbits} implies that the image of any two points on a general $r$-secant are projectively equivalent.
When $A$ is a rank-$r$ point on a degenerate secant, the dimension of $\phi(A)$ can only increase, which the above bound does not allow. 
Similarly, the degree of $\phi(A)$ can only decrease at special points, when the dimension does not increase.  

We explain in more detail the case $r=2$, and show that the associated cubic hypersurface is a Pfaffian both for the open orbit of proper secant lines and the closed orbit of tangent lines.

In the first case, we may assume that the point $A$ lies on the secant line $\langle P,Q \rangle$, with $P=v_4([1:0:0])$ and $Q=v_4([0:0:1])$. 
Without loss of generality we can take the catalecticant matrix whose entries are all set equal to zero, except for $a_{(4,0,0)} = a_{(0,0,4)} = 1$. 
If we set $y_0, \ldots y_{20}$ to be the projective coordinates for 
matrices 
$
\begin{psmallmatrix}
	y_0 & y_1 & y_2 & y_3 & y_4 & y_5 \\
	\cdots & y_6 & y_7 & y_8 & y_9 & y_{10} \\
	\cdots & \cdots & \cdots & \cdots & \cdots & \cdots \\
	\cdots & \cdots & \cdots & \cdots & \cdots & y_{20} \\
\end{psmallmatrix}
$
in $\P\hat\Sym^6$, elimination gives $11$ linear equations 
\[
  {y}_{20}={y}_{19}={y}_{17}={y}_{14}={y}_{10}={y}_{5}={y}_{4}={y}_{3}={y}_{2}={y}_{1}={y}_{0}=0
\]
together with the cubic Pfaffian of the matrix $S_1$ below.

In the second case, we may assume that $A$ is a point on a tangent line to $P$, and a representative for it is the matrix $A$ for which $a_{(4,0,0)}=a_{(3,1,0)}=1$ and the remaining entries are zero. 
We get again $11$ linear equations
\[
    {y}_{10}={y}_{9}={y}_{8}={y}_{7}={y}_{6}={y}_{5}={y}_{4}={y}_{3}={y}_{2}={y}_{1}={y}_{0}=0
\]
together with the cubic Pfaffian of the matrix $S_2$ below.
\[
S_1 = 
\begin{psmallmatrix}
	0 & 0 & y_6 & y_7 & y_8 & y_9 \\
	0 & 0 & y_7 & y_{11} & y_{12} & y_{13} \\
	-y_6 & -y_7 & 0 & y_{12}-y_9 & y_{15} & y_{16} \\
	-y_7  & -y_{11} & y_9-y_{12} & 0 & y_{16} & y_{18} \\
	-y_8 & -y_{12} & -y_{15} & -y_{16} & 0 & 0 \\
	-y_9 & -y_{13} & -y_{16} & -y_{18} & 0 & 0
\end{psmallmatrix}
\quad
S_2 = 
\begin{psmallmatrix}
	0 & 0 & y_{19} & y_{20} & y_{14} & y_{17} \\
	0 & 0 & y_{13} & y_{14} & y_{11} & y_{12} \\
	-y_{19} & -y_{13} & 0 & y_{17}-y_{18} & y_{12} & y_{15} \\
	-y_{20}  & -y_{14} & y_{18}-y_{17} & 0 & y_{13} & y_{16} \\
	-y_{14} & -y_{11} & -y_{12} & -y_{13} & 0 & 0 \\
	-y_{17} & -y_{12} & -y_{15} & -y_{16} & 0 & 0
\end{psmallmatrix}
\]

When $r=1$, the $6$ linear equations defining the $\P^{14}$ in which $\phi(A)$ is embedded can be easily read from the parametrization of its image.  
The statement regarding dimension is proven numerically, again with \verb|Macaulay2|, by intersecting the parametrization of $\phi(A)$ with $11$ general hyperplanes of $\P\hat\Sym^m$, which gives $14$ points.

The code for all these computations can be found in~\cite{Complementary}. 
\end{proof}

\begin{cor}
\label{cor:dimension-phi(C_r)}
For $r=1,\dots,4$, we have $\dim \phi(C_r) = 2r + 13 - \dim C_r$, while $\dim(C_5)=5$. 
In particular, only $\phi(C_1)$ is a hypersurface in the reciprocal variety.
\end{cor}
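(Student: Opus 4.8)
The plan is to combine the pointwise picture of Proposition~\ref{prop:dimension-fibers-image-strata} with a count of the family of secant planes sweeping out $C_r$. First, reading off that proposition, the image $\phi(A)$ of a rank-$r$ matrix $A\in\P\,\Cat(2,3)$ has dimension $d_r:=13-\dim C_r$: indeed the listed dimensions $11,8,5,2,0$ for $r=1,\dots,5$ coincide with $13-\dim C_r$. Moreover, by Remark~\ref{rem:rank-to-corank} every matrix in $\phi(A)$ has column space contained in $\ker A$, so $\phi(A)\subseteq\P(\SP^2(\ker A))\cong\P^{\binom{7-r}{2}-1}$, and a general point of $\phi(A)$ has rank exactly $6-r$ (automatic for $r=3,4$, where $\phi(A)$ is the whole $\P(\SP^2(\ker A))$, and for $r=2$ because the cubic Pfaffian is not contained in the quartic determinantal hypersurface; for $r=1$ this is checked separately, see below).

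Now fix $r\le 4$. The secant variety $C_r=\sigma_r(\nu_4(\P^2))$ is swept out by the $(r-1)$-planes spanned by $r$ points of the Veronese surface $C_1=\nu_4(\P^2)$, and such $r$-secant planes form an irreducible family $\mathcal S_r$ of dimension $2r$. By Terracini's lemma \cite{terracini-1911-sulle-vk} the tangent space $T_AC_r$, hence by Lemma~\ref{lem:parametrization-with-limits} the image $\phi(A)$, is constant as $A$ moves in a general member $\Pi\in\mathcal S_r$; call this common value $\phi(\Pi)$, and note that by the $\PGL(3)$-homogeneity of Remark~\ref{rem:PGL(3)-orbits} it is projectively equivalent to the fixed $\phi(A)$ above, in particular of dimension $d_r$. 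Consequently $\phi(C_r)$ --- the image of the locus of matrices of rank exactly $r$ --- equals $\pi_2(\mathcal I_r)$, where $\mathcal I_r=\{(\Pi,P):\Pi\in\mathcal S_r,\ P\in\phi(\Pi)\}$ fibres over $\mathcal S_r$ with fibres of dimension $d_r$; thus $\dim\mathcal I_r=2r+d_r$ and $\dim\phi(C_r)\le 2r+d_r=2r+13-\dim C_r$.

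For the reverse inequality I would show that $\pi_2\colon\mathcal I_r\to\phi(C_r)$ is generically finite. A general point $P$ of $\phi(C_r)$ has rank $6-r$, so its column space is $(6-r)$-dimensional and equals $\ker A$ for any matrix $A$ of the rank-$r$ stratum with $P\in\phi(A)$; hence every $r$-secant plane $\Pi$ lying over $P$ has the same associated span $U:=(\ker A)^{\perp}\subset\C^6$ --- the column space of a general member of $\Pi$ --- and $\Pi$ is spanned by the rank-one catalecticants $vv^{\top}$ with $v\in\nu_2(\P^2)\cap\P(U)$. This intersection is finite: for $r\le 4$ it consists of exactly $r$ points, since for general $p_1,\dots,p_r\in\P^2$ the only points of the quadratic Veronese $\nu_2(\P^2)\subset\P^5$ on $\langle\nu_2(p_1),\dots,\nu_2(p_r)\rangle$ are $\nu_2(p_1),\dots,\nu_2(p_r)$ themselves (clear from the rank of $\sum_i\lambda_ip_ip_i^{\top}$ equalling the number of nonzero $\lambda_i$ when $r\le 3$, and from a general $\P^3$ meeting the degree-$4$ surface $\nu_2(\P^2)$ in $4$ points when $r=4$). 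So only finitely many $\Pi$ lie over $P$, whence $\dim\phi(C_r)=2r+13-\dim C_r$ for $r=1,\dots,4$. For $r=5$ one argues directly: a corank-$1$ catalecticant $A$ has $\phi(A)=[\Adj A]=[ww^{\top}]$ with $\langle w\rangle=\ker A$, and as $A$ ranges over corank-$1$ catalecticants the line $\ker A$ sweeps all of $\P^5$ --- the catalecticants annihilating a fixed general vector form a $\P^8$ that meets $C_4$ in a proper subvariety, hence contains corank-exactly-$1$ points --- so $\phi(C_5)=\nu_2(\P^5)$ has dimension $5$. Finally, since $\P\,\Cat(2,3)^{-1}$ is $14$-dimensional and the dimensions just computed are $13,12,11,10,5$ for $r=1,\dots,5$, only $\phi(C_1)$ has codimension one, which is the last assertion (and the input to Theorem~\ref{prop:degree-formula}).

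The step I expect to be the real obstacle is the generic-finiteness claim, which packages two facts: the identifiability of ternary quartics of Waring rank at most $4$, and the statement that a general point of $\phi(C_r)$ has the expected rank $6-r$ (the delicate case here being $r=1$, where $\phi(C_1)$ must be shown not to lie in the rank-$4$ locus). Both are elementary but need to be written out; the rank statement can also be read off the explicit parametrizations produced in the proof of Proposition~\ref{prop:dimension-fibers-image-strata}, and in fact the entire lower bound can be obtained numerically by slicing such a parametrization of $\phi(C_r)$ with $2r+d_r$ general hyperplanes and checking the intersection is a finite set of points, exactly as is done for $r=1$ in that proof.
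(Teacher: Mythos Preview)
For $r\le 4$ your strategy and the paper's coincide: parametrize $C_r$ by $r$-secant planes (a $2r$-dimensional family), invoke Terracini to see that $\phi$ is constant along each secant plane, and take the fibre dimension $13-\dim C_r$ from Proposition~\ref{prop:dimension-fibers-image-strata}. The paper stops there and records the count $2r+13-\dim C_r$ as an equality without further comment; you go one step further and justify the lower bound, showing that the projection $(\Pi,P)\mapsto P$ from your incidence variety is generically finite by recovering $\Pi$ from $P$ via $\mathrm{col}(P)=\ker A$ together with the identifiability of at most $4$ points on $\nu_2(\P^2)$. This fills in a step the paper leaves implicit, and your closing remarks correctly isolate the two ingredients (expected rank $6-r$ on a general fibre point, and Waring identifiability for rank $\le 4$) that make it work.

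For $r=5$ the two arguments diverge. You compute $\phi(C_5)$ directly as $\{[ww^\top]:w\in\P^5\}=\nu_2(\P^5)$ by arguing that every kernel direction is realised by some corank-$1$ catalecticant. The paper instead stays inside the secant picture: by Terracini the tangent hyperplane to $\sigma_5(\nu_4(\P^2))$ at a general point cuts $\nu_4(\P^2)$ in the image of a plane quartic singular at the five base points, hence a double conic; so these hyperplanes---and therefore the points $\phi(A)$---are parametrized by the $\P^5$ of plane conics. Your route is shorter and names $\phi(C_5)$ explicitly; the paper's ties the count back to the geometry of $\sigma_5$. The two match once one notes that the kernel vector $w\in\C^6\cong\SP^2(\C^3)$ of a corank-$1$ catalecticant is exactly the apolar conic through the five points.
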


\begin{proof}
By Proposition~\ref{prop:dimension-fibers-image-strata}, the image of a point $A \in C_r$ is a variety of dimension $13-\dim C_r$. 
Terracini's Lemma ensures that for any two points $A_1, A_2$ in the same $r$-secant space $S_r$, we have $\phi(A_1) = \phi(A_2)$. 
Then 
\[
	\phi(C_r) = \bigcup_{A \in C_r} \phi(A) = \bigcup_{S_r \subset \sigma_r(v_4(\P^2))} \phi(S_r).
\]
An $r$-secant space is determined by the choice of $r$ points on the  surface, so altogether $\dim \phi(C_r) = 2r + 13 - \dim C_r$.

The case $r=5$ is special because $5$ points on $v_4 (\P^2)$ uniquely determine the tangent space to $\sigma_5(v_4(\P^2))$, again by Terracini's Lemma. 
This tangent space is in fact a hyperplane in $\P\,\Cat(2,3)$, which cuts the Veronese surface in a curve that is the image, via the Veronese embedding, of a degree $4$ curve in $\P^2$. 
In particular, such curve is singular at those $5$ points, so it must be the image of a double conic. 
Therefore, for $r=5$ the choice of a tangent space is uniquely determined by the choice of a point in the $\P^5$ of plane quadrics. 
\end{proof}

\section{Towards understanding the reciprocal variety}
\label{sec:variety}
In this last section, we bring together the results from Section~\ref{sec:Graph-Conormal} to move towards a better understanding of the reciprocal variety $\P\,\Cat(2,3)^{-1}$.

The dimension analysis made in Proposition~\ref{prop:dimension-fibers-image-strata}, combined with basic intersection theory of complete quadrics, allows us to prove that the degree of the reciprocal variety depends only on the degree of $\phi(C_1)$, see Theorem~\ref{prop:degree-formula}.

The numerical results obtained in Section~\ref{sec:numerical}, imply that there must be non-empty intersection between $\P\,\Cat(2,3)^{-1}$ and the orthogonal space $\P\,\Cat(2,3)^\perp$. 
In Proposition~\ref{prop:intersection} we explain this phenomenon from a geometric perpective, by studying the intersection  of each $\phi(C_r)$ with the orthogonal space.

\paragraph*{Contribution of the rank loci to the degree}
Complete quadrics provide a natural set up when it comes to understanding the inversion map and its regularization.
We give a quick overview of the needed definitions and properties  in the case of catalecticants of ternary quartics.
For a more general and detailed treatment of this topic, see~\cite[Section 3]{manivel2020complete} as well as the references cited therein.

Set $V = \C^6$  and recall from Section~\ref{sec:s1} that $\P(\SP^2(V))$ is the space $\P \Sym^6$ of symmetric matrices while $\P(\SP^2(\wedge^{5}V))=\P\hat\Sym^6$. 

The space of \emph{complete quadrics} on $V$, denoted $\CQ(V)$ is the Zariski closure of the image of
\[
 	\psi \colon \P(\SP^2(V))) \dashrightarrow \P(\SP^2(V)) \times \P \Bigl(\SP^2\Bigl(\bigwedge^2 V \Bigr) \Bigr) \times \cdots \times \P \Bigl(\SP^2 \Bigl(\bigwedge^{5}V \Bigr) \Bigr)
\]
given by 
\[
 	[A] \mapsto ([A], [\wedge^2 A], \ldots, [\wedge^{5} A]).
\]

We consider two kinds of divisor classes in the intersection ring of $\CQ(V)$.
Classes of the first kind, denoted by $\mu_1, \ldots, \mu_{5}$, are the pull-backs of hyperplane classes via the natural projection maps $\pi_1, \ldots, \pi_5$ on the five factors.

Classes of the second kind, denoted by $\delta_1, \ldots, \delta_{5}$, correspond to exceptional divisors coming from the blow-up of the rank loci.
We are going to use the following relation between divisor classes:

\begin{equation}
\label{eq:divisor-class-relation}
	\mu_{5} = \frac{1}{6}\cdot \sum_{r=1}^{5} r \cdot \delta_r.
\end{equation}

\begin{rem}
Complete quadrics can be equivalently thought as a particular compactification of the space of smooth quadrics in $\P V$. 
This is done by blowing-up $\P\Sym^6$ first along its rank-$1$ locus, then the strict transform of its rank-$2$ locus, and so on up to the rank-$5$ locus.
In particular, $\pi_{5}$ is a regularization of $\phi \colon \P\Sym^6 \dashrightarrow \P\hat\Sym^6$.
\end{rem}

Let us denote with $\widetilde{\P\,\Cat}$ the proper transform of $\P\,\Cat(2,3)$  inside $\CQ(V)$, and with $\widetilde{C}_r$ the strict transform of $C_r$.
The projection map $\pi_{5}$ restricted to $\widetilde{\P\,\Cat}$ is then a regularization of $\phi \colon \P\,\Cat(2,3) \dashrightarrow \P\hat\Sym^6$.

\begin{thm}
\label{prop:degree-formula} 
The degree of the reciprocal variety of $\P\,\Cat(2,3)$ only depends on the degree of $\phi(C_1)$.
More precisely,
\[
  \deg ( \P\,\Cat(2,3)^{-1}) = \frac{\deg \phi(C_1)}{6}.
\]
\end{thm}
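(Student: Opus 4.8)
The plan is to compute $\deg(\P\,\Cat(2,3)^{-1})$ as an intersection number on $\CQ(V)$, pulling the hyperplane class of $\P\hat\Sym^6$ back via the regularization $\pi_5$ and intersecting with $\widetilde{\P\,\Cat}$. Since $\pi_5\colon\widetilde{\P\,\Cat}\to\P\,\Cat(2,3)^{-1}$ is birational (by Definition~\ref{defi:inversionMap} and the fact that $\dim\P\,\Cat(2,3)^{-1}=N=14$), the degree of the reciprocal variety equals the intersection number $\mu_5^{14}\cdot[\widetilde{\P\,\Cat}]$ computed inside $\CQ(V)$, provided that the generic point of $\P\,\Cat(2,3)^{-1}$ has a single preimage and no exceptional contribution is swept into the count. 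First I would set this up carefully: write $\deg(\P\,\Cat(2,3)^{-1}) = \int_{\widetilde{\P\,\Cat}} \mu_5^{14}$, which makes sense because $\widetilde{\P\,\Cat}$ is $14$-dimensional.

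Next I would substitute the divisor-class relation~\eqref{eq:divisor-class-relation}, $\mu_5 = \tfrac16\sum_{r=1}^5 r\,\delta_r$, into one factor of $\mu_5^{14}$ — or better, expand all of $\mu_5^{14}$ in terms of the $\delta_r$ — and argue that most terms vanish on $\widetilde{\P\,\Cat}$. The key geometric input is Corollary~\ref{cor:dimension-phi(C_r)}: for $r\ge 2$, the image $\phi(C_r)$ has dimension strictly less than $13$, so it is not a divisor in $\P\,\Cat(2,3)^{-1}$; equivalently, the exceptional divisor $\widetilde{C}_r\cap(\text{blowup})$ maps via $\pi_5$ to something of codimension $\ge 2$ in the image. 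Hence, when we intersect the appropriate power of $\mu_5$ restricted to $\widetilde{\P\,\Cat}$ against the class $\delta_r|_{\widetilde{\P\,\Cat}}$ for $r\ge 2$, the contribution is zero because $\mu_5$ restricted to the exceptional divisor over $\widetilde C_r$ is a pullback from a variety of dimension $\dim\phi(C_r) < 13$, so a power of $\mu_5$ exceeding $\dim\phi(C_r)$ kills it. Only $\delta_1$ survives, because $\phi(C_1)$ is a hypersurface (the "in particular" of Corollary~\ref{cor:dimension-phi(C_r)}). This reduces $\mu_5^{14}\cdot[\widetilde{\P\,\Cat}]$ to $\tfrac16\,\mu_5^{13}\cdot\delta_1\cdot[\widetilde{\P\,\Cat}]$, and the latter is exactly $\tfrac16\deg\phi(C_1)$, since $\delta_1|_{\widetilde{\P\,\Cat}}$ is the class of the exceptional divisor over $\widetilde C_1$ and $\mu_5^{13}$ restricted there computes the degree of its image $\phi(C_1)$ in the ambient $\P^{14}$.

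I would then carry out the bookkeeping: expanding $\mu_5^{14}=\bigl(\tfrac16\sum_r r\delta_r\bigr)^{14}$ gives a sum of monomials $\delta_{r_1}\cdots\delta_{r_{14}}$; I need to argue that any such monomial involving a $\delta_r$ with $r\ge 2$, when capped with $[\widetilde{\P\,\Cat}]$, vanishes. The cleanest route is to write $\mu_5 = \mu_5$ directly and only substitute~\eqref{eq:divisor-class-relation} once, so that $\deg(\P\,\Cat(2,3)^{-1}) = \tfrac16\sum_r r\,\mu_5^{13}\delta_r\cdot[\widetilde{\P\,\Cat}]$, and then show $\mu_5^{13}\delta_r\cdot[\widetilde{\P\,\Cat}]=0$ for $r\ge 2$ and equals $\deg\phi(C_1)$ for $r=1$. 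The vanishing for $r\ge 2$ follows because $\delta_r\cdot[\widetilde{\P\,\Cat}]$ is supported on the exceptional locus over $\widetilde C_r$, on which $\mu_5$ is pulled back from $\phi(C_r)\subseteq\P^{14}$ — here I'd use Proposition~\ref{prop:dimension-fibers-image-strata} to see that the image of each fiber has the stated dimension and the base $C_r$ contributes $\dim C_r$, totalling $\dim\phi(C_r)<13$ — so $\mu_5^{13}$ restricted there is a degree-$13$ class on a variety of dimension $<13$, hence zero.

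\textbf{Main obstacle.} The delicate point is the precise identification $\mu_5^{13}\cdot\delta_1\cdot[\widetilde{\P\,\Cat}] = \deg\phi(C_1)$, which requires knowing that $\pi_5$ restricted to the exceptional divisor over $\widetilde C_1$ is generically finite of degree one onto $\phi(C_1)$ (so that no multiplicity factor sneaks in), and that $\widetilde{C}_1$ meets the exceptional divisor in the expected way. This is where I expect the bulk of the work: one must check that the blow-up of $C_1=v_4(\P^2)$ inside $\widetilde{\P\,\Cat}$ genuinely contributes its full exceptional divisor with multiplicity one, using the fact (from Remark~\ref{rem:rank-to-corank} and the proof of Lemma~\ref{lem:parametrization-with-limits}) that the fiber of $\pi_1^{-1}(A)\cap\Gamma$ over a rank-$1$ point is $\P N_{C_1|_A}$ projectivized, and that $\pi_2$ is injective on a general such fiber — this injectivity, or a computation of its degree, is the crux. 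The secondary subtlety is making sure the birational map $\pi_5|_{\widetilde{\P\,\Cat}}$ onto $\P\,\Cat(2,3)^{-1}$ has degree one (no extra sheets), which follows from birationality of $\phi$ itself as noted after Definition~\ref{defi:inversionMap}, but should be spelled out.
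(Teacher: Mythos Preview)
Your proposal is correct and follows essentially the same route as the paper: compute $\deg(\P\,\Cat(2,3)^{-1})=[\widetilde{\P\,\Cat}]\cdot\mu_5^{14}$, substitute~\eqref{eq:divisor-class-relation} into one factor, use $[\widetilde{\P\,\Cat}]\cdot\delta_r=[\widetilde{C_r}]$, and kill the $r\ge 2$ terms via Corollary~\ref{cor:dimension-phi(C_r)}. The paper's proof is terser and simply asserts that $[\widetilde{C_r}]\cdot\mu_5^{13}$ equals the number of points in $\phi(C_r)$ cut by $13$ general hyperplanes; the degree-one issue you flag as the ``main obstacle'' is not addressed there either, so your level of caution exceeds what the paper itself provides.
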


\begin{proof}
The degree of $\P\,\Cat(2,3)^{-1}$ is, by definition, the number of points lying in its intersection with $14$ general hyperplanes of $\P\hat\Sym^6$.
Since $\phi$ is birational, its regularization has degree $1$.
Hence, by pulling back classes in the space of complete quadrics, the degree of the reciprocal variety can be equivalently computed as 
$[\widetilde{\P\,\Cat}]\cdot \mu_{5}^{14}$. 
According to~\eqref{eq:divisor-class-relation} 
we can rewrite the last expression as 
\[
  \frac{\sum_{r=1}^{5} r \cdot[\widetilde{\P\,\Cat}]\cdot\delta_r\cdot\mu_{5}^{13}}{6} = 
  \frac{\sum_{r=1}^{5}r \cdot[\widetilde{C_r}]\cdot\mu_{5}^{13}}{6}.
\]
But $[\widetilde{C_r}]\cdot\mu_{5}^{13}$ coincides with the number of points in the intersection of $\phi(C_r)$ with $13$ general hyperplanes of $\P\hat\Sym^6$. 
Corollary~\ref{cor:dimension-phi(C_r)} implies that this number of points is $0$ for every $r>1$.
So, the unique contribution to the above sum is $[\widetilde{C_1}]\cdot \mu_{5}^{13} = \deg \phi(C_1)$.
\end{proof}

For ternary quartics and binary forms of even degree only $\phi(C_1)$ contributes to the degree of the reciprocal variety.
This might not be true for higher catalecticants, where the rank loci are not necessarily secant varieties to Veronese embeddings.

\begin{que}
Does the degree of $\P\,\Cat(k,n+1)^{-1}$ depend only on the degree of $\phi(C_1)$, for every $(k,n)$?
\end{que}

\paragraph*{Intersection with the orthogonal space}
The orthogonal space $\P\,\Cat(2,3)^\perp$ is a $5$-dimensional projective linear space. 
We want to study its intersection with $\P\,\Cat(2,3)^{-1}$ and the first step towards this is the analysis of its rank loci.

\begin{rem}
\label{rem:rank-loci-of-orthogonal}
A computation with \verb|Macaulay2| shows that the generic rank of $\P\,\Cat(2,3)^{\perp}$ is $6$ and that it is empty in rank $1$ and $2$. 
On the other hand, the rank-$3$ locus is a Veronese surface $v_2(\P^2)$ while the loci of rank $4$ and $5$ are the cubic hypersurface defining the secant variety $\sigma_2(v_2(\P^2))$.
The $\PGL(3)$ action in Remark~\ref{rem:PGL(3)-orbits} ensures that the only possibilities for $\phi(C_r)\cap \P\,\Cat(2,3)^{\perp}$ are the rank loci of the orthogonal space.
\end{rem}

\begin{prop}
\label{prop:intersection}
The orthogonal $\P\,\Cat(2,3)^{\perp}$ does not contain the image of any full-rank point and intersects the varieties  $\phi(C_r)$ in the emptyset for $r=3,4,5$, and set-theoretically in a Veronese surface $v_2(\P^2)$ for $r=1,2$. 
In particular: 
\[
	\P\,\Cat(2,3)^{-1}\cap \P\,\Cat(2,3)^{\perp} \neq \emptyset.
\]
\end{prop}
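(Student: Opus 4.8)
The plan is to peel off the full-rank locus with a one-line trace computation and then analyse each intersection $\phi(C_r)\cap\P\,\Cat(2,3)^\perp$ separately, using the rank bound of Remark~\ref{rem:rank-to-corank}, the shape of the fibres from Proposition~\ref{prop:dimension-fibers-image-strata}, and the fact recorded in Remark~\ref{rem:rank-loci-of-orthogonal} that a $\PGL(3)$-invariant closed subset of $\P\,\Cat(2,3)^\perp$ can only be $\emptyset$, the Veronese surface $v_2(\P^2)$, the cubic $\sigma_2(v_2(\P^2))$, or all of $\P\,\Cat(2,3)^\perp$. First, if $A\in\Cat(2,3)$ has full rank then $\phi(A)=[\Adj(A)]$; pairing with $A$ itself gives $\operatorname{trace}(A\cdot\Adj(A))=6\det(A)\neq 0$, so $[\Adj(A)]\notin\P\,\Cat(2,3)^\perp$. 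Hence no image of a full-rank point lies in the orthogonal. Since $\P\,\Cat(2,3)^{-1}$ is the closure of the $\phi$-image of the full-rank locus and each $\phi(C_r)$ is contained in $\P\,\Cat(2,3)^{-1}$ (being the image of an exceptional divisor under the regularisation of Remark~\ref{rem:relation-blowup-graph}), we obtain $\P\,\Cat(2,3)^{-1}\cap\P\,\Cat(2,3)^\perp=\bigcup_{r=1}^{5}\bigl(\phi(C_r)\cap\P\,\Cat(2,3)^\perp\bigr)$; moreover $\phi(C_r)$ is the closure of the union of the fibres $\phi(A)$ over rank-exactly-$r$ points, so by Remark~\ref{rem:rank-to-corank} every matrix it contains has rank at most $6-r$.

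Now $\phi(C_5)$ and $\phi(C_4)$ consist of matrices of rank $\le 1$ and $\le 2$ respectively, while $\P\,\Cat(2,3)^\perp$ contains no matrix of rank $\le 2$; hence $\phi(C_5)\cap\P\,\Cat(2,3)^\perp=\phi(C_4)\cap\P\,\Cat(2,3)^\perp=\emptyset$. For $r=3$ we have $\phi(C_3)\subseteq\{\rk\le 3\}$, so $\phi(C_3)\cap\P\,\Cat(2,3)^\perp$ lies in the rank-$3$ locus $v_2(\P^2)$ of the orthogonal and, by the classification above, is either empty or equal to $v_2(\P^2)$. To decide, I would take the $\P^5=\phi(A)$ of a general rank-$3$ point $A$ --- its six defining linear equations being read off the parametrisation exactly as in the proof of Proposition~\ref{prop:dimension-fibers-image-strata} --- and verify with \verb|Macaulay2| that this $\P^5$ is disjoint from the $5$-plane $\P\,\Cat(2,3)^\perp$ inside $\P\hat\Sym^6\simeq\P^{20}$. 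Because $\phi$ is constant along $3$-secant planes (Terracini's lemma), all general such planes are $\PGL(3)$-equivalent (Remark~\ref{rem:PGL(3)-orbits}), and the dimension of $\phi(A)$ cannot grow at special rank-$3$ points, this single check forces $\phi(C_3)\cap\P\,\Cat(2,3)^\perp=\emptyset$.

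For $r=2$ the fibre $\phi(A)$ is a cubic Pfaffian hypersurface in a $\P^9$, and there are two $\PGL(3)$-orbits of $2$-secant lines, the proper secants and the tangent lines, with Pfaffian matrices $S_1$ and $S_2$ from Proposition~\ref{prop:dimension-fibers-image-strata}. For a representative of each orbit I would intersect $\phi(A)$ with $\P\,\Cat(2,3)^\perp$ and check that the resulting locus consists only of rank-$3$ matrices, hence sits inside $v_2(\P^2)$, and is nonempty; letting $A$ range over $C_2$ these loci sweep out a nonempty $\PGL(3)$-invariant subset of $v_2(\P^2)$, which by the classification is all of $v_2(\P^2)$. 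The case $r=1$ is treated identically, using the explicit parametrisation of $\phi(A)$ for a rank-$1$ point: one verifies that $\phi(A)\cap\P\,\Cat(2,3)^\perp$ again meets only the rank-$3$ stratum and is set-theoretically $v_2(\P^2)$. Since $v_2(\P^2)\subseteq\phi(C_2)\subseteq\P\,\Cat(2,3)^{-1}$ and $v_2(\P^2)\subseteq\P\,\Cat(2,3)^\perp$, the intersection $\P\,\Cat(2,3)^{-1}\cap\P\,\Cat(2,3)^\perp$ is nonempty.

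The hard part is not any individual computation --- each is a routine elimination or linear-algebra check --- but the passage from a single orbit representative to the whole of $C_r$: this is precisely what the $\PGL(3)$-equivariance of $\phi$, $C_r$ and $\P\,\Cat(2,3)^\perp$, combined with Terracini's lemma and the very short list of $\PGL(3)$-invariant closed subsets of $\P\,\Cat(2,3)^\perp$ from Remark~\ref{rem:rank-loci-of-orthogonal}, is meant to supply. A secondary subtlety is that for $r=2$ one genuinely has to treat the tangent-line orbit separately from the proper-secant orbit, which is why Proposition~\ref{prop:dimension-fibers-image-strata} records two Pfaffian matrices instead of one.
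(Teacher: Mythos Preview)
Your strategy is the paper's strategy: same rank argument via Remark~\ref{rem:rank-loci-of-orthogonal} for $r=4,5$, same fibrewise checks on $\PGL(3)$-orbit representatives for $r=2,3$ using Proposition~\ref{prop:dimension-fibers-image-strata}, same appeal to $\PGL(3)$-invariance and the short list of invariant closed subsets of $\P\,\Cat(2,3)^\perp$ to pass from one orbit representative to the whole of $C_r$. Your trace identity $\operatorname{trace}(A\cdot\Adj A)=6\det A\neq 0$ for the full-rank case is a cleaner one-line substitute for the paper's verification via the pull-back $\hat\phi^{*}(\P\,\Cat(2,3))$.

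The only substantive discrepancy is your predicted outcome at $r=2$. The paper's computation shows that the cubic $8$-fold $\phi(A)$ over a point on a \emph{proper} secant line meets $\P\,\Cat(2,3)^\perp$ in the \emph{emptyset}; it is only the tangent-line orbit whose fibre meets $\P\,\Cat(2,3)^\perp$, and there in a single rank-$3$ point of $v_2(\P^2)$, which then varies to fill out the whole surface as the tangent line moves. So your expectation that both orbits produce a nonempty rank-$3$ locus is wrong, although once the actual computation corrects this your argument still reaches the right conclusion. For $r=1$ the paper is more economical than you propose: it merely exhibits one explicit point $[0:\cdots:0:1:-2:0:0]\in\phi\bigl(v_4([1{:}0{:}0])\bigr)\cap\P\,\Cat(2,3)^\perp$ and invokes Remark~\ref{rem:rank-loci-of-orthogonal}, rather than determining the full fibrewise intersection.
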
 

\begin{proof}
We give an account of the computations that are needed for each case. 
A \verb|Macaulay2| code for this is available at~\cite{Complementary}. 

We first compute the pull-back $\hat\phi^{*}(\P\,\Cat(2,3))$ and verify that its intersection with  $\P\,\Cat(2,3)^{\perp}$ happens only along degenerate matrices.

When $r=4,5$, the statement follows from Remark~\ref{rem:rank-loci-of-orthogonal}: indeed, points on $\phi(C_4)$ and $\phi(C_5)$ have rank at most $2$ and $1$ respectively, while $\P\,\Cat(2,3)^{\perp}$ is empty in rank $1$ and $2$. 

The cases $r=2,3$ can be checked fiberwise using Proposition~\ref{prop:dimension-fibers-image-strata}. 
For any rank-$3$ point, and for rank-$2$ points lying on proper secant lines to $C_1$, the corresponding $\P^5$s and cubic $8$-folds intersect $\P\,\Cat(2,3)^{\perp}$ in the emptyset. 
On the other hand, for rank-$2$ points lying on tangent lines to $C_1$, the corresponding $8$-fold intersects $\P\,\Cat(2,3)^{\perp}$ in one single point belonging to its Veronese surface $v_2(\P^2)$. 
This intersection point varies inside the surface as we consider different tangent lines. 
We then concude by Remark~\ref{rem:rank-loci-of-orthogonal}.

Finally, for $r=1$, we check that the image of $v_4([1:0:0])$ contains the point $[0: \cdots : 0 : 1 :-2:0:0]$.
We again conclude by Remark~\ref{rem:rank-loci-of-orthogonal}. 
\end{proof}

We conclude with some final conjectures and observations.
From Proposition~\ref{prop:dimension-fibers-image-strata} we know that for a point $A \in C_2$, its image $\phi(A)$ is defined by the cubic Pfaffian of a $6 \times 6 $ skew-symmetric matrices. 
A dimension count would suggest that something similar happens also for rank-$1$ points:

\begin{conj}
If $A$ is a point on $C_1$, then $\phi(A)$ is defined by the $7$ cubic Pfaffians of a $7 \times 7 $ skew-symmetric matrix.
\end{conj}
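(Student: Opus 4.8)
The plan is to carry the method of Proposition~\ref{prop:dimension-fibers-image-strata} one step further, now recovering the whole homogeneous ideal of $\phi(A)$ rather than only its linear part and its dimension. By the $\PGL(3)$-equivariance of $\phi$ (Remark~\ref{rem:PGL(3)-orbits}) it suffices to treat the single rank-$1$ point $A=v_4([1:0:0])$, whose representative is the catalecticant matrix with $a_{(4,0,0)}=1$ and all other entries zero, that is, the symmetric matrix $e_1e_1^{\top}$. First I would make the parametrization~\eqref{eq:limit-formula} fully explicit: expanding $\wedge^{5}(A+tX)$ as a polynomial in $t$ and using $\rk A=1$, only the coefficients of $t^{4}$ and $t^{5}$ survive, the $t^{4}$-coefficient is supported on the lower $5\times 5$ block and equals, up to harmless row/column signs, the adjugate of the $5\times 5$ submatrix $X'$ of $X$ obtained by deleting the first row and column. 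As $X$ ranges over $\Cat(2,3)$, the matrix $X'$ ranges over an explicit $12$-dimensional LSSM $W\subset\Sym^{5}$; this identifies $\phi(A)$, inside the $\P^{14}$ cut out by the vanishing of its first row and column, with the reciprocal variety $\P W^{-1}$ of that smaller LSSM of $5\times 5$ symmetric matrices.

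Next I would compute the homogeneous ideal $I\subseteq R$ of $\phi(A)=\P W^{-1}$, where $R$ is the coordinate ring of the ambient $\P^{14}$, either by elimination from the parametrization above or by first producing a $7\times 7$ skew-symmetric matrix $M$ of linear forms whose $6\times 6$ sub-Pfaffians vanish on the parametrization. The aim is to prove that $R/I$ is arithmetically Gorenstein of codimension $3$, with total Betti numbers $(1,7,7,1)$ and minimal free resolution
\[
  0\to R(-7)\to R(-4)^{7}\xrightarrow{\,M\,} R(-3)^{7}\to R\to R/I\to 0 .
\]
Granting this, the Buchsbaum--Eisenbud structure theorem for codimension-$3$ Gorenstein ideals forces $I$ to be generated by the $7$ submaximal Pfaffians of a $7\times 7$ skew-symmetric matrix, which by the degree shifts in the resolution must have linear entries; hence $\phi(A)$ is cut out by $7$ cubic Pfaffians. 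An equivalent route is to exhibit $\phi(A)$ as a sufficiently general $\P^{14}$-linear section of the generic submaximal Pfaffian locus $\mathrm{Pf}_{4}(\C^{7})\subset\P^{20}$ and to invoke that general linear sections of arithmetically Gorenstein varieties remain arithmetically Gorenstein with the same Betti table.

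The main obstacle is precisely the assertion that $R/I$ is arithmetically Cohen--Macaulay of codimension $3$ --- equivalently, that the ideal generated by the $7$ cubic Pfaffians one writes down is prime (or at least radical) of the expected codimension, so that it really cuts out $\phi(A)$ and nothing more. The inclusion $\phi(A)\subseteq V(\mathrm{Pf}_{6}(M))$ is a routine check on the parametrization; the reverse inclusion amounts to controlling the degeneracy locus of $M$, and a non-generic skew matrix of linear forms can have a reducible or lower-dimensional Pfaffian locus. One must therefore verify that the $21$ entries of $M$, which are linear forms in only $15$ variables, are in sufficiently general position. I expect this to be the delicate point; the reduction to the $5\times 5$ reciprocal variety $\P W^{-1}$ above should, however, bring the required computation --- the Betti table of $I$, or the primality of the Pfaffian ideal --- within reach of \verb|Macaulay2|, even though the analogous computation for the full variety $\P\,\Cat(2,3)^{-1}$ is not.
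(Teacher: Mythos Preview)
The statement you address is stated in the paper as a \emph{conjecture}; the authors give no proof, only the remark that a dimension count is suggestive. There is therefore no argument in the paper to compare your proposal against.

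Your strategy is nonetheless sound and goes well beyond what the paper offers. The reduction to a single $A$ by $\PGL(3)$-transitivity on $C_1$ is correct, and your identification of $\phi(A)$ with the reciprocal variety $\P W^{-1}$ of the $12$-dimensional LSSM $W\subset\Sym^{5}$ obtained by deleting the first row and column of the generic catalecticant is both correct and a genuine simplification not made explicit in the paper: the limit computation you sketch does collapse to $[\Adj(X')]$, and the three parameters $a_{(4,0,0)},a_{(3,1,0)},a_{(3,0,1)}$ lost in passing from $X$ to $X'$ are exactly the kernel of that projection. Note also that the numerical count in Proposition~\ref{prop:dimension-fibers-image-strata} (the intersection of $\phi(A)$ with $11$ general hyperplanes consists of $14$ points) gives $\deg\phi(A)=14$, which is precisely the degree predicted by the resolution $0\to R(-7)\to R(-4)^{7}\to R(-3)^{7}\to R$; this is strong corroboration for the Buchsbaum--Eisenbud route you propose.

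You have correctly isolated the one real obstacle: showing that the ideal you produce is saturated and prime of codimension~$3$, so that the Pfaffian locus equals $\phi(A)$ rather than merely containing it. One point deserving care is irreducibility of $\phi(A)$ itself: limits along directions $X$ with $\det(X')=0$ are not captured by $[\Adj(X')]$, and you should check that they do not contribute an extra lower-dimensional component to the graph fibre. For the reduced problem over $W\subset\Sym^{5}$ both this and the Betti-table verification should be within reach of \verb|Macaulay2|, so your plan is a credible route to upgrading the conjecture to a theorem.
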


We now might wonder if the reciprocal variety itself is defined by the cubic Pfaffians of a $8 \times 8$ skew-symmetric matrix. 
Although the codimension would be the correct one, the degree would be at most $84$ and the number of cubic generators would be $28$, both numbers differing from the numerical results obtained in Section~\ref{sec:numerical}. 
What we can still conjecture is:

\begin{conj}
\label{conj}
The reciprocal variety $\P\,\Cat(2,3)^{-1}$ is defined by exactly $27$ cubic equations which are Pfaffians of (possibly different) $7 \times 7$ skew-symmetric matrices.
\end{conj}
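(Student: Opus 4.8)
The plan is to break the statement into three parts of increasing difficulty: (i) exhibit $27$ explicit cubics vanishing on $\P\,\Cat(2,3)^{-1}$, each of them a Pfaffian of a $7\times 7$ skew-symmetric matrix of linear forms; (ii) prove that these $27$ cubics cut out $\P\,\Cat(2,3)^{-1}$ set-theoretically; (iii) upgrade this to the ideal-theoretic statement, that the $27$ cubics are precisely the minimal generators of the homogeneous ideal of the reciprocal variety.

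For (i), the equations themselves are accessible: run the interpolation of Section~\ref{sec:numerical} over $\mathbb{Q}$ to produce a basis $F_1,\dots,F_{27}$ of the degree-$3$ part of the ideal, and then check, for each $F_i$, that it admits a linear $6\times 6$ Pfaffian presentation --- equivalently a $7\times 7$ one --- which is a finite linear-algebra search. A structural reason for the Pfaffian shape is suggested by the local models: Proposition~\ref{prop:dimension-fibers-image-strata} attaches to every rank-$2$ point $A$ a $6\times 6$ skew-symmetric matrix of linear forms whose cubic Pfaffian defines $\phi(A)$, and (granting the conjecture on rank-$1$ points stated above) each $A=v_4(p)$ carries a $7\times 7$ skew-symmetric matrix $M(p)$ whose seven maximal Pfaffians define $\phi(v_4(p))$. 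Since $\phi(C_1)=\bigcup_{p\in\P^2}\phi(v_4(p))$ is the unique hypersurface of the reciprocal variety (Corollary~\ref{cor:dimension-phi(C_r)}), one hopes the $27$ global cubics arise from the family $\{M(p)\}_{p\in\P^2}$ by a suitable elimination of $p$. That a single $8\times 8$ skew matrix cannot account for all of them is in any case forced by the numerics: its $6\times 6$ sub-Pfaffians would give $28$ cubics and degree $84$, not the observed $27$ and $85$.

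For (ii), let $Z\subseteq\P\hat\Sym^6$ be the common zero locus of $F_1,\dots,F_{27}$, so that $\P\,\Cat(2,3)^{-1}\subseteq Z$ by construction and both lie in $\P^{20}$. Since the reciprocal variety is irreducible of dimension $14$ and degree $85$ (Theorem~\ref{thm:numerical-results}), it suffices to show that $Z$ has no further component. The natural route is a stratification argument: every point of $Z$ has a well-defined corank, and combining Remark~\ref{rem:rank-to-corank} with the explicit fiber descriptions of Proposition~\ref{prop:dimension-fibers-image-strata} one checks, corank by corank, that the part of $Z$ of each corank is already contained in the corresponding $\phi(C_r)\subseteq\P\,\Cat(2,3)^{-1}$. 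This is consistent with the degree count: intersecting $Z$ with a general linear space of codimension $14$, exactly as in the numerical degree computation, yields $85$ reduced points, which forces equality of $14$-cycles and rules out extra top-dimensional components and multiplicity.

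The hard part will be (iii). The saturation that would settle the ideal-theoretic statement directly is the one already declared computationally infeasible in Section~\ref{sec:numerical}, and since the ideal of the reciprocal variety is not the ideal of $6\times 6$ Pfaffians of an $8\times 8$ skew matrix, its minimal free resolution is not the Buchsbaum--Eisenbud complex and has to be found by other means. A realistic target is to prove that the ideal generated by $F_1,\dots,F_{27}$ is saturated, radical, and has the expected Hilbert polynomial --- this would in particular pin down $HF_4=510$ and hence a $57$-dimensional space of linear first syzygies among the $27$ generators --- and the most promising tool for this is the complete-quadrics resolution $\pi_{5}\colon\widetilde{\P\,\Cat}\to\P\,\Cat(2,3)^{-1}$ of Section~\ref{sec:variety}: computing the pushforward of $\mathcal{O}_{\widetilde{\P\,\Cat}}$ and of its relevant twists should produce the syzygies of the $27$ cubics and, with them, the ideal-theoretic control needed to close the conjecture. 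It is this structural step, rather than the exhibition of the equations or the set-theoretic comparison, where I expect the genuine difficulty to lie.
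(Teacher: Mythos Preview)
The statement you are attempting to prove is Conjecture~\ref{conj}; the paper does \emph{not} prove it, and there is no ``paper's own proof'' to compare against. The authors offer only the heuristic immediately preceding it: since each rank-$2$ fibre is cut out by a $6\times 6$ Pfaffian and the rank-$1$ fibres are conjecturally cut out by the seven sub-Pfaffians of a $7\times 7$ skew matrix, one might hope the global equations are of the same type, while the numerical obstruction ($27$ cubics, degree $85$) rules out a single $8\times 8$ skew matrix. That is the full extent of the paper's evidence.

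Your proposal is therefore not a proof but a research outline, and you are candid about this in part~(iii). A few concrete gaps are worth naming. In part~(i), the ``structural reason'' you invoke rests on the preceding conjecture about rank-$1$ points, which is itself open in the paper; without it you are left with a bare computer search for Pfaffian presentations of $27$ interpolated cubics, and you give no argument that such presentations must exist. Your parenthetical ``equivalently a $7\times 7$ one'' also blurs the content of the conjecture: any single $6\times 6$ Pfaffian is trivially a sub-Pfaffian of some $7\times 7$ matrix, so unless one asks for several of the $27$ cubics to come from the \emph{same} $7\times 7$ matrix, the $7\times 7$ formulation adds nothing. In part~(ii), the stratification-by-corank argument is not actually carried out: you would need, for each corank, explicit containment of the corank-$s$ part of $Z$ in the corresponding $\phi(C_r)$, and the fibre descriptions of Proposition~\ref{prop:dimension-fibers-image-strata} describe $\phi(C_r)$ only pointwise, not by global equations. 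The degree-$85$ count you cite controls only the top-dimensional components of $Z$, not embedded or lower-dimensional ones. Part~(iii) is, as you say, the genuine obstacle, and the pushforward computation from $\widetilde{\P\,\Cat}$ you propose is a reasonable strategy but is itself a substantial project, not a step one can declare done.

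In short: your plan is a plausible line of attack on an open problem, but none of its three parts is complete, and the paper itself makes no claim to a proof.
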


%%%%%%%%%%%%%%%%%%%%%%%%%%%%%%%%%%%%%%%%%%%%

\bibliography{references}

\providecommand{\bysame}{\leavevmode\hbox to3em{\hrulefill}\thinspace}
\providecommand{\MR}{\relax\ifhmode\unskip\space\fi MR }
% \MRhref is called by the amsart/book/proc definition of \MR.
\providecommand{\MRhref}[2]{%
  \href{http://www.ams.org/mathscinet-getitem?mr=#1}{#2}
}
\providecommand{\href}[2]{#2}
\begin{thebibliography}{10}

\bibitem{amendola2021maximum}
Carlos Am\'endola - Lukas Gustafsson - Kathl\'en Kohn - Orlando Marigliano -
  Anna Seigal, \emph{The Maximum Likelihood Degree of Linear Spaces of
  Symmetric Matrices}, arXiv:2012.00198 (2021).

\bibitem{Magma}
Wieb Bosma - John Cannon - Catherine Playoust, \emph{The {M}agma algebra
  system. {I}. {T}he user language}, J. Symbolic Comput. {24} no.~3-4 (1997),
  235--265, Computational algebra and number theory (London, 1993).

\bibitem{brambilla2008alexander}
Maria~Chiara Brambilla - Giorgio Ottaviani, \emph{On the Alexander--Hirschowitz
  theorem}, Journal of Pure and Applied Algebra {212} no.~5 (2008), 1229--1251.

\bibitem{HomotopyContinuation.jl}
Paul Breiding - Sascha Timme, \emph{{H}omotopy{C}ontinuation.jl: {A} {P}ackage
  for {H}omotopy {C}ontinuation in {J}ulia}, International Congress on
  Mathematical Software, Springer, 2018, pp.~458--465.

\bibitem{Complementary}
Laura Brustenga~i Moncus\'i - Elisa Cazzador - Roser Homs, \emph{Complementary
  material to "Inverting catalecticants of ternary quartics"},  (2020),
  \url{https://github.com/LauraBMo/InvertingCats}.

\bibitem{BucBuc}
Weronika Buczy\'{n}ska - Jaros{\l}aw Buczy\'{n}ski, \emph{Secant varieties to
  high degree {V}eronese reembeddings, catalecticant matrices and smoothable
  {G}orenstein schemes}, J. Algebraic Geom. {23} no.~1 (2014), 63--90.

\bibitem{OWL}
Mathias Drton - Bernd Sturmfels - Seth Sullivant, \emph{Lectures on algebraic
  statistics}, Oberwolfach Seminars, vol.~39, Birkh\"{a}user Verlag, Basel,
  2009.

\bibitem{M2}
Daniel~R. Grayson - Michael~E. Stillman, \emph{Macaulay2, a software system for
  research in algebraic geometry}, Available at
  \url{http://www.math.uiuc.edu/Macaulay2/}.

\bibitem{harris1992first}
Joe Harris, \emph{Algebraic geometry: a first course}, vol. 133, Springer
  Science \& Business Media, 2013.

\bibitem{IK99}
Anthony Iarrobino - Vassil Kanev, \emph{Power sums, {G}orenstein algebras, and
  determinantal loci}, Lecture Notes in Mathematics, vol. 1721,
  Springer-Verlag, Berlin, 1999, Appendix C by Iarrobino and Steven L. Kleiman.

\bibitem{LandsbergOttaviani}
Joseph~M. Landsberg - Giorgio Ottaviani, \emph{Equations for secant varieties
  of {V}eronese and other varieties}, Ann. Mat. Pura Appl. (4) {192} no.~4
  (2013), 569--606.

\bibitem{manivel2020complete}
Laurent Manivel - Mateusz Micha\l{}ek - Leonid Monin - Tim Seynnaeve - Martin
  Vodi\v{c}ka, \emph{Complete quadrics: Schubert calculus for Gaussian models
  and semidefinite programming}, arXiv:2011.08791 (2020).

\bibitem{ExponentialVarieties}
Mateusz Micha\l{}ek - Bernd Sturmfels - Caroline Uhler - Piotr Zwiernik,
  \emph{Exponential varieties}, Proc. Lond. Math. Soc. (3) {112} no.~1 (2016),
  27--56.

\bibitem{MPIrepo}
Tim Seynnaeve, \emph{Complete quadrics: Schubert calculus for Gaussian models
  and semidefinite programming},  (2020),
  (\url{https://mathrepo.mis.mpg.de/MLdegreeCompleteQuadrics}).

\bibitem{Sascha}
Bernd Sturmfels - Sascha Timme - Piotr Zwiernik, \emph{Estimating linear
  covariance models with numerical nonlinear algebra}, Algebraic Statistics
  {11} no.~1 (2020), 31–52.

\bibitem{Sturmfels2010Gaussian}
Bernd Sturmfels - Caroline Uhler, \emph{Multivariate {G}aussian, semidefinite
  matrix completion, and convex algebraic geometry}, Ann. Inst. Statist. Math.
  {62} no.~4 (2010), 603--638.

\bibitem{sylvester1970principles}
James~J. Sylvester, \emph{An Essay on Canonical Forms, Supplement to a Sketch
  of a Memoir on Elimination, Transformation and Canonical Forms},  (1851),
  Paper 34 in {\it Collected Mathematical Papers of James Joseph Sylvester},
  vol. I, 203--216, Chelsea, New York, 1973, originally published by Cambridge
  University Press, London, Fetter Lane, E.C. 1904.

\bibitem{terracini-1911-sulle-vk}
Alessandro Terracini, \emph{Sulle $v_k$ per cui la variet{\`a} degli $S_h\,(h +
  1)$ seganti ha dimensione minore dell'ordinario}, Rendiconti del Circolo
  matematico di Palermo (1884-1940) {31} no.~1 (1911), 392--396.

\bibitem{LinearCovarianceModels}
Sascha Timme, \emph{LinearCovarianceModels.jl},  (2020),
  (\url{https://github.com/saschatimme/LinearCovarianceModels.jl}).

\end{thebibliography}

\end{document}